\documentclass[10pt]{amsart}
\usepackage{mathrsfs}
\renewcommand{\mathcal}{\mathscr}
\usepackage{amsmath,amsthm,amsopn,amstext,amscd,amsfonts,amssymb}
\usepackage{graphicx, color, enumerate}
\usepackage[latin1]{inputenc}
\usepackage[active]{srcltx}
\usepackage{tikz}
\usepackage{pgf}
\usepackage{etex}
\usepackage{verbatim}
\usepackage{tikz-3dplot}
\usepackage{pgfkeys}
\usepackage{fp}

\numberwithin{equation}{section}
\newcounter{bbb}
\numberwithin{bbb}{section}

\newtheorem{theorem}[bbb]{Theorem}
\newtheorem{lemma}[bbb]{Lemma}
\newtheorem{proposition}[bbb]{Proposition}
\newtheorem{corollary}[bbb]{Corollary}
\newtheorem{definition}[bbb]{Definition}
\newtheorem{remark}[bbb]{Remark}

\renewcommand{\geq}{\geqslant}
\renewcommand{\ge}{\geqslant}
\renewcommand{\leq}{\leqslant}
\renewcommand{\le}{\leqslant}

\usepackage{color}

\begin{document}

\title[The heat equation with nonlocal diffusion]{A Widder's type Theorem for the heat equation with nonlocal diffusion}

\author{ Bego\~na Barrios, Ireneo Peral, Fernando Soria, Enrico Valdinoci}
\thanks{Work partially supported by project
MTM2010-18128, MICINN.}
\keywords{Heat equation,  fractional laplacian, trace of strong solutions, uniqueness of non-negative solutions
\\
\indent 2010 {\it Mathematics Subject Classification:
35K05, 35K15, 35C15, 35B30, 35B99.} }
\address{ Bego\~na Barrios, Ireneo Peral, Fernando Soria,
\hfill\break\indent Departamento de Matem{\'a}ticas, Universidad Autonoma
de Madrid, Spain.
\hfill\break\indent Enrico Valdinoci,
\hfill\break\indent Dipartimento di Matematica, Universit\`a degli Studi di Milano, Italy.}

\email{{\tt bego.barrios@uam.es, ireneo.peral@uam.es,\hfill\break\indent fernando.soria@uam.es, enrico.valdinoci@unimi.it}}

\begin{abstract}
The main goal of this work is to prove that every non-negative
{\it strong solution} $u(x,t)$  to the problem
$$
    u_t+(-\Delta)^{\alpha/2}u=0 \ \quad\mbox{for }
    (x,t)\in\mathbb{R}^{n}\times(0,T), \quad 0<\alpha<2,
$$
 can be written as
$$u(x,t)=\int_{\mathbb{R}^{n}}{P_{t}(x-y)u(y,0)\, dy},$$
where
$$P_{t}(x)=\frac{1}{t^{n/\alpha}}P\left(\frac{x}{t^{1/\alpha}}\right),
$$
and
$$
P(x):=\int_{\mathbb{R}^{n}}{e^{ix\cdot\xi-|\xi|^{\alpha}}d\xi}.
$$
This result shows uniqueness in the setting of non-negative
solutions and extends some classical
results for the heat equation by D. V. Widder in \cite{W0} to the
nonlocal diffusion framework.
\end{abstract}

\date{\today}
\maketitle

\section{Introduction}
The heat equation  has the hyperplane $t=0$ as a characteristic surface and this causes that the Cauchy problem with initial data on $t=0$
is not well possed  in general. However there is a clear agreement between the
Principles of Thermodynamics and the model of
transfer of heat given by such an equation. This is reflected into the fact
that the initial temperature evolves in time as the convolution with a kernel,
giving rise to an average, and smoothing out in this way the potential
effect of sharp thermal differences (this is in agreement with
the entropy effect of the Second Principle of Thermodynamics);
also uniqueness holds true
under a positivity assumption on the function (this is in agreement
with the so called Third Principle of Thermodynamics, according to which
temperatures are always positive if measured in
the Kelvin scale). In this sense D. V. Widder in \cite{W0} proved the following classical result:

\

{\it Assume that $u:\mathbb{R}^n\times [0,T)\subset\overline{\mathbb{R}^n_+}\rightarrow\mathbb{R}$ is so that
$$u(x,t)\ge 0, \quad u\in \mathcal{C}(\mathbb{R}^n\times [0,T)), \quad u_t, u_{x_i x_i}\in \mathcal{C}(\mathbb{R}^n\times (0,T)),$$
and  satisfies
$$u_t(x,t)-\Delta u(x,t)=0,\quad (x,t)\in \mathbb{R}^n\times (0,T),$$
in the classical sense.

Then
$$u(x,t)=\frac{1}{(4\pi t)^{\frac n2}}\int_{\mathbb{R}^n} u(y,0) \exp\Big(\frac{-|x-y|^2}{4t}\Big)d y.$$
}

\

In this paper we obtain a similar result for the {\it nonlocal heat equation}, that is the equation,
\begin{equation}\label{eq:base}
    u_t+(-\Delta)^{\alpha/2}u=0 \ \quad\mbox{ for }     (x,t)\in\mathbb{R}^{n}\times(0,T), \quad 0<\alpha<2,
\end{equation}
in which the diffusion is given by a power of the Laplacian (we refer to \cite{npv} and \cite{stein} for basic definitions and properties
of the fractional Laplace operator).

It is worthy to point out that for every $\alpha\in(0,2]$ the operator in~\eqref{eq:base} does not satisfies the so called {\it Hadamard condition},
that is, the Cauchy problem is ill possed (see \cite{had} for more details). As a consequence we face here a problem similar to the one of the heat equation.

It is well known that the operator in~\eqref{eq:base}
and its inverse are nonlocal.  In a sense, equation~\eqref{eq:base} is perhaps one of the simplest
classical examples of pseudodifferential operators.
Notice that if we consider the problem
$$
    u_t+(-\Delta)^{\alpha/2}u=0 \qquad\mbox{for }
    (x,t)\in\mathbb{R}^{n}\times(0,T), \quad 0<\alpha<2$$
with initial datum $u(x,0)=u_0(x)$
assuming, say, $u_0\in \mathcal{C}(\mathbb{R}^n)\bigcap L^\infty(\mathbb{R}^n)$, then a solution is obtained by the convolution with the kernel
\begin{equation}\label{nucleot}
P_{t}(x)=\frac{1}{t^{n/\alpha}}P\left(\frac{x}{t^{1/\alpha}}\right),
\end{equation}
where
\begin{equation}\label{1.2BIS}
P(x):=\int_{\mathbb{R}^{n}}{e^{ix\cdot\xi-|\xi|^{\alpha}}d\xi}.
\end{equation}
That is, a solution is given by
\begin{equation}\label{convolution}
u(x,t)=\int_{\mathbb{R}^{n}}{P_{t}(x-y)u_0(y)\, dy}.
\end{equation}
We look here for a class of solutions of the fractional parabolic equation, such that the sign condition $u(x,t)\ge 0$ imply that  $u$,  necessarily,
is of the form \eqref{convolution} with $u_0(x)$  replaced by the trace $u(x,0)$.
This is the type of extension that we propose
for the classical result of Widder to the nonlocal equation in~\eqref{eq:base}.

In this article we will describe several interpretations of what {\it solution to equation}  \eqref{eq:base} means, and consider weak, viscosity and strong solutions, in a sense that we now make more precise.

\

\subsection*{Weak solutions} Consider the space
$$\mathcal{L}^{\alpha/2}(\mathbb{R}^{n}):=\left\{
u:\mathbb{R}^{n}\to\mathbb{R}\quad \mbox{measurable}:\, \int_{\mathbb{R}^{n}}{\frac{|u(x)|}{1+|x|^{n+\alpha}}\, dx}<\infty\right\},$$
endowed with the norm
$$\|u\|_{\mathcal{L}^{\alpha/2}(\mathbb{R}^{n})}:=\int_{\mathbb{R}^{n}}{
\frac{|u(x)|}{ 1+|x|^{n+\alpha}}\, dx}.$$
If $u\in\mathcal{L}^{\alpha/2}(\mathbb{R}^{n})$ then $(-\Delta)^{\alpha/2}u$ can be defined in the weak sense as a distribution, that is we can compute the duality product $\langle (-\Delta)^{\alpha/2}u, \varphi\rangle$, for every $\varphi$ in the Schwartz class.

\begin{definition}
We say that $u(x,t)$ is a weak solution of the fractional heat problem
$$
\left\{\begin{array}{ll}
    u_t+(-\Delta)^{\alpha/2}u=0 &\quad\mbox{for }
    (x,t)\in\mathbb{R}^{n}\times(0,T), \\
    u(x,0)=u_0(x)&\quad\mbox{in } \mathbb{R}^{n},
  \end{array}\right.
$$
if  the following conditions hold:
\begin{itemize}
\item[i)]{$u\in L^{1}([0,T'],\mathcal{L}^{\alpha/2}(\mathbb{R}^{n}))$ for every $T'<T$.}
\item[ii)]{$u\in \mathcal{C} ((0,T), {L}^{1}_{loc}(\mathbb{R}^{n}))$.}
\item[iii)]{For every test function $\varphi\in C^{\infty}_{0}(\mathbb{R}^{n}\times[0,T))$ and $0<T'<T$ one has that
\begin{equation}\int_{0}^{T'}{\int_{\mathbb{R}^{n}}{\left[-u(x,t)\varphi_t(x,t)+u(x,t)(-\Delta)^{\alpha/2}\varphi(x,t)\right]\, dx\, dt}}=
\label{WS} \end{equation}
$$\int_{\mathbb{R}^{n}}{u(x,T')\varphi(x,T')\, dx}-\int_{\mathbb{R}^{n}}{u_0(x)\varphi(x,0)\, dx}.
$$}
\end{itemize}
\end{definition}
Condition ii) is imposed so that the right hand side of equality (\ref{WS}) makes sense for every $T'$. Notice that the continuity  would follow in any case from the left hand side of (\ref{WS}) due to the integrability condition in i).

\

\subsection*{Viscosity solutions} Consider $Q=\mathbb{R}^n\times(0,T)$ and
\begin{eqnarray*}
\mathcal{C}^{1,2}_p(Q)&=&\Big\{
f:Q\rightarrow \mathbb{R}\,
|\qquad f_t\in \mathcal{C}(Q),
\quad f_{x_i,x_j}\in \mathcal{C}(Q)\\
&&{\mbox{and }}
\sup_{t\in(0,T)}|f(x,t)|\le C(1+|x|)^p
\Big\}\end{eqnarray*}
\begin{definition}\label{def.visc.parabolic}
A function $u\in\mathcal{C}(Q)$ is a
\emph{viscosity subsolution (resp. supersolution)} of
\begin{equation}\label{E:P}
u_t+(-\Delta)^{\alpha/2}\big(u\big)= 0
\end{equation}
in $Q$ if for all $(\hat{x},\hat{t})\in Q$ and  $\varphi\in \mathcal{C}^{1,2}_p(Q)$
such that $u-\varphi$ attains a local maximum (minimum) at  $(\hat{x},\hat{t})$ one has
\[
\varphi_t(\hat x,\hat t)+(-\Delta)^{\alpha/2}\big(\varphi(\hat{x},\hat t))\big)\leq 0\qquad(\text{resp.}\ \geq).
\]

We say that $u\in\mathcal{C}(Q)$ is a \emph{viscosity solution} of
\eqref{E:P} in $Q$ if it is both a viscosity subsolution and supersolution. See \cite{Imbert} for more details about these type of solutions.
\end{definition}

\

\subsection*{Strong solutions}
We know (see Proposition 2.1.4 of \cite{S}) that if $u\in \mathcal{L}^{\alpha/2}(\mathbb{R}^{n})\cap \mathcal{C}^{\alpha+\gamma}_{\rm loc}(\mathbb{R}^{n})$ (or $\mathcal{C}^{1,\alpha+\gamma-1}$ if $\alpha>1$), for some $\gamma>0$ then $(-\Delta)^{\alpha/2}u$ is a continuous function and it is well defined as the following principal value
$$(-\Delta)^{\alpha/2}u(x):=C(n,\alpha)\mbox{P.V.}\int_{\mathbb{R}^{n}}{\frac{u(x)-u(y)}{|x-y|^{n+\alpha}}\, dy},\, \alpha\in(0,2).$$
Here $C(n,\alpha)$ denotes the constant satisfying the identity
$$(-\Delta)^{\alpha/2}u=\mathcal{F}^{-1}(|\xi|^{\alpha}\mathcal{F}u),\, \xi\in\mathbb{R}^{n},\, u\in\mathcal{S},\, \alpha\in(0,2);$$
that is,
$$C(n,\alpha)=\left(\int_{\mathbb{R}^{n}}{\frac{1-cos(\xi_1)}{|\xi|^{n+\alpha}}d\xi}\right)^{-1},$$
see \cite{npv}.

\

In order to allow a larger class of functions, and following standard procedures in the theory of singular integrals, we will define hereafter  the principal value as the two-sided limit
$$\mbox{P.V.}\int_{\mathbb{R}^{n}}{\frac{u(x)-u(y)}{|x-y|^{n+\alpha}}\, dy}=\lim_{\epsilon\to 0}\int\limits_{\{y\,|\, \epsilon<|x-y|< 1/\epsilon\}}{\frac{u(x)-u(y)}{|x-y|^{n+\alpha}}\, dy}.$$
We  will use the same notation for this extended operator. When $u\in \mathcal{L}^{\alpha/2}(\mathbb R^n)$ this definition coincides with the usual one.

\begin{definition}\label{strong}
We say that $u(x,t)$ is a strong solution of the fractional heat equation
$$
u_t+(-\Delta)^{\alpha/2}u=0 \quad\mbox{for }(x,t)\in\mathbb{R}^{n}\times (0,T),
$$
if the following conditions hold:
\begin{itemize}
\item[i)]{$u_t(x,t)\in \mathcal{C}(\mathbb{R}^{n}\times(0,T))$.}
\item[ii)]{$u\in \mathcal{C}(\mathbb{R}^{n}\times[0,T))$.}
\item[iii)]{The equation is satisfied pointwise for every $(x,t)\in\mathbb{R}^{n}\times(0,T)$, that is,
$$u_t(x,t)+C(n,\alpha)\mbox{{\rm P.V.}}\int_{\mathbb{R}^{n}}{\frac{u(x,t)-u(y,t)}{|x-y|^{n+\alpha}}\, dy}=0.$$
}
\end{itemize}
\end{definition}\
Note that if $u(x,t)$ is a strong solution then
$$\mbox{P.V.}\int_{\mathbb{R}^{n}}{\frac{u(x,t)-u(y,t)}{|x-y|^{n+\alpha}}\, dy}\in \mathcal{C}(\mathbb{R}^{n}\times(0,T)).$$

Observe also that if $u(x,t)$ is a strong solution of the fractional heat equation satisfying $u(x,t)\in L^{1}([0,T'],\mathcal{L}^{\alpha/2}(\mathbb{R}^{n}))$ for every $T'<T$, then $u(x,t)$ is a weak solution of the fractional heat equation
(as a byproduct of our results, we will see that this holds true for non-negative
strong solutions, see the forthcoming Corollary~\ref{corol4}).

\

Our main contribution in this setting is the following Widder's type result

\begin{theorem}\label{MAIN}
If $u\ge0$ is a strong solution of
$$
    u_t+(-\Delta)^{\alpha/2}u=0 \ \quad\mbox{for }
    (x,t)\in\mathbb{R}^{n}\times(0,T), \quad 0<\alpha<2,
$$
then
$$u(x,t)=\int_{\mathbb{R}^{n}}{P_{t}(x-y)u(y,0)\, dy}.$$
\end{theorem}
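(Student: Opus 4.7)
The strategy mirrors Widder's classical argument: introduce the candidate Poisson representation, establish a one-sided comparison by truncation, and close the loop with a semigroup identity and a limit passage. Writing $u_0(x):=u(x,0)$, which is continuous and nonnegative, the candidate is $v(x,t):=\int_{\mathbb{R}^n}P_t(x-y)u_0(y)\,dy$ (a priori possibly $+\infty$), and the goal is to prove $u\equiv v$.

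Stage 1 (lower comparison). For each $R>0$, $u_0\chi_{B_R}$ lies in $L^1\cap L^\infty\cap\mathcal{C}(\mathbb{R}^n)$, so $v_R(x,t):=\int_{B_R}P_t(x-y)u_0(y)\,dy$ is a bounded smooth strong solution of \eqref{eq:base} with continuous initial trace $u_0\chi_{B_R}$. The difference $u-v_R$ is then a strong solution of \eqref{eq:base} whose initial datum $u_0\chi_{B_R^c}$ is nonnegative, and which is bounded from below (since $u\ge 0$ and $v_R$ is bounded). An appropriate minimum principle for strong solutions bounded from below should then yield $u\ge v_R$ on $\mathbb{R}^n\times(0,T)$; sending $R\to\infty$ via monotone convergence gives $u\ge v$ pointwise, and in particular $v<\infty$, so $u_0\in\mathcal{L}^{\alpha/2}(\mathbb{R}^n)$.

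Stage 2 (semigroup identity and limit). Applying Stage 1 to $u$ on the time-shifted strip $(\tau,T)$ with initial datum $u(\cdot,\tau)$ produces the inequality
$$u(x,t)\ge \int_{\mathbb{R}^n} P_{t-\tau}(x-y)\,u(y,\tau)\,dy,\qquad 0<\tau<t<T.$$
Evaluating at $x=0$ and using the uniform lower bound $P_s(-y)\ge c_t(1+|y|)^{-(n+\alpha)}$ for $s\in[t/2,t]$ produces a uniform control $\|u(\cdot,\tau)\|_{\mathcal{L}^{\alpha/2}(\mathbb{R}^n)}\le C_t$ for $\tau\in(0,t/2)$. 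A uniqueness result for \eqref{eq:base} within the class of continuous strong solutions with $\mathcal{L}^{\alpha/2}$-growth then upgrades the previous inequality to the equality
$$u(x,t)=\int_{\mathbb{R}^n} P_{t-\tau}(x-y)\,u(y,\tau)\,dy.$$
Letting $\tau\to 0^+$, the pointwise continuity $u(y,\tau)\to u_0(y)$ together with the uniform $\mathcal{L}^{\alpha/2}$-bound and the polynomial tail estimate on $P_{t-\tau}(x-\cdot)$ permit a dominated convergence argument, yielding $u(x,t)=v(x,t)$.

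The main obstacle is the uniqueness step invoked in Stage 2 (and, closely related, the minimum principle of Stage 1). Widder's original argument exploits the Gaussian decay of the classical heat kernel, whereas here $P_t$ only decays polynomially, $P_t(x)\sim t/|x|^{n+\alpha}$, so the standard cutoff and integration-by-parts arguments must be redone. Concretely, one has to construct test functions whose $(-\Delta)^{\alpha/2}$-image is well-controlled at infinity and to handle the nonlocal tail contributions that the fractional operator produces when acting on cutoffs, which is the typical technical source of difficulty in fractional parabolic theory.
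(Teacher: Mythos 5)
Your high-level plan is the right one and mirrors the paper's: first show that the Poisson integral of the initial trace is a pointwise lower bound for $u$ (by truncating $u_0$, convolving, and comparing), then close the gap by a uniqueness theorem that identifies $u$ with that Poisson integral. You also correctly identify where the difficulty lies. The problem is that the proposal leaves that difficulty unresolved, and it is precisely there that the mathematical content of the theorem sits.

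The genuine gap is the uniqueness statement invoked in Stage~2, ``within the class of continuous strong solutions with $\mathcal{L}^{\alpha/2}$-growth.'' You call it ``the main obstacle'' and correctly anticipate that the slow tail of $P_t$ forces one to control how $(-\Delta)^{\alpha/2}$ acts on cutoffs, but this is exactly the content of Theorem~\ref{Th1}, which occupies all of Section~\ref{uniweak}. There the authors test the weak formulation against $\psi=\varphi\,\phi_R$, with $\varphi=\theta\ast P_{t_0-\cdot}$ a backward solution attached to a fixed $\theta\in\mathcal{C}^\infty_0$ and $\phi_R$ a spatial cutoff, and then estimate the leftover terms $I_1(R)$ and $I_2(R)$ coming from $(-\Delta)^{\alpha/2}\phi_R$ and the bilinear form $B(\varphi,\phi_R)$ over a six-region decomposition of $\mathbb{R}^n\times\mathbb{R}^n$, using the decay $|\varphi(x,t)|\lesssim(1+|x|^{n+\alpha})^{-1}$ and (for $\alpha\ge 1$) the corresponding gradient bound. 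Without those estimates the argument does not close. Note also that the paper proves uniqueness for \emph{weak} solutions with zero initial datum, and then shows (Corollaries~\ref{corol3} and~\ref{corol4}, which correspond to your $\mathcal{L}^{\alpha/2}$ estimate) that a nonnegative strong solution is automatically a weak one; this is a slightly cleaner division of labor than a uniqueness theorem stated directly for strong solutions in a growth class.

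Two smaller issues. In Stage~1 you appeal to ``an appropriate minimum principle for strong solutions bounded from below,'' which is a Phragm\'en--Lindel\"of-type statement that itself needs a proof in the nonlocal setting. The paper sidesteps it: since $v_R$ has compactly supported datum, it decays like $|x|^{-(n+\alpha)}$ uniformly in $t\in(0,T)$, so one picks $\rho$ large enough that $v_R\le\varepsilon$ outside $B_\rho$, applies the easy \emph{local} maximum principle of Lemma~\ref{weak} to $v_R-u-\varepsilon$ on the bounded cylinder $B_\rho\times(0,T)$, and lets $\varepsilon\to 0$. In Stage~2, the extra time shift to level $\tau>0$ and the subsequent $\tau\to 0^+$ passage are both superfluous and, as written, not justified: the uniform bound on $\|u(\cdot,\tau)\|_{\mathcal{L}^{\alpha/2}}$ controls $\int u(y,\tau)(1+|y|)^{-(n+\alpha)}\,dy$ but does not give a single dominating function for the family $P_{t-\tau}(x-\cdot)\,u(\cdot,\tau)$, so dominated convergence does not directly apply. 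Since a strong solution is by definition continuous up to $t=0$, no shift is needed: set $w=u-\int P_t(\cdot-y)u(y,0)\,dy\ge 0$, note $w$ is a strong solution with vanishing trace, verify via Stage~1 that $w\in L^1([0,T'],\mathcal{L}^{\alpha/2}(\mathbb{R}^n))$ so that $w$ is a weak solution, and apply Theorem~\ref{Th1} once.
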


Note that, if for a given polynomial $p$ with real coefficients we consider the differential operator $p(D)$, then we obtain that
$$p(D)P_{t}(x)=\int_{\mathbb{R}^{n}}{e^{ix\cdot\xi}p(i\xi)e^{-t|\xi|^{\alpha}}\, d\xi}.$$
Since $e^{-t|\xi|^{\alpha}}$ is a tempered distribution we deduce that $P_{t}\in C^{\infty}(\mathbb{R}^{n}\times(0,\infty))$ (see for instance \cite{droniou} and \cite{B1} for more details).
In particular, and as a consequence of Theorem \ref{MAIN}, we get  that if $u$ is a non negative strong solution of the fractional heat equation then $u\in C^{\infty}(\mathbb{R}^{n}\times(0,T))$.

The paper is organized as follows.
In Section \ref{uniweak} we prove a uniqueness result for weak solutions that, in turn, will be the key step to obtain our representation theorem.
Next in Section \ref{postrong} we prove the main result for strong solutions.
We start by proving a comparison result that will allow us to show that every positive strong solution $u(x,t)$ is bigger than or equal to  the convolution of the
trace $u(x,0)$ with the kernel $P_t(x)$. By a scaling argument we will prove  that any positive strong solution is also a weak solution and then, by the uniqueness result of the previous section, we will conclude with the proof of the theorem.

Finally, in the last section  we establish the pointwise behavior of positive strong solution that has an interest on its own and that could provide and alternative proof to our representation result.

\section{Uniqueness for weak solutions}\label{uniweak}

To start with, we prove a uniqueness result for weak solutions with
vanishing initial condition. The proof is quite complicated
and it involves many fine integral estimates. The nonlocal feature
of the problem also makes localization and cutoff arguments much harder
than in the classical case.

\begin{theorem}\label{Th1}
Set $T>0$, $\alpha\in(0,2)$ and let $u$ be a weak solution of the fractional heat equation
\begin{equation}\label{2.00}
\left\{\begin{array}{ll}
    u_t+(-\Delta)^{\alpha/2}u=0 &\quad\mbox{for }
    (x,t)\in\mathbb{R}^{n}\times(0,T), \\
    u(x,0)=0&\quad\mbox{in } \mathbb{R}^{n}.
  \end{array}\right.
\end{equation}
Then $u(x,t)=0$ for every $t\in(0,T)$ and a.e. $x\in\mathbb{R}^{n}$.
\end{theorem}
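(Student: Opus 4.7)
The plan is a duality argument. Fix $T' \in (0, T)$ and $\psi \in C_c^\infty(\mathbb{R}^n)$; it suffices to show $\int_{\mathbb{R}^n} u(x, T') \psi(x)\,dx = 0$. Because $(-\Delta)^{\alpha/2}$ is self-adjoint, the adjoint backward problem $-\varphi_t + (-\Delta)^{\alpha/2} \varphi = 0$ on $\mathbb{R}^n \times (0, T')$ with $\varphi(\cdot, T') = \psi$ is solved by $\Phi(x, t) := (P_{T'-t} \ast \psi)(x)$, which is smooth but not compactly supported. To make it into an admissible test function I would truncate: pick $\chi \in C_c^\infty(\mathbb{R}^n)$ with $\chi \equiv 1$ on $B_1$ and $\mathrm{supp}\,\chi \subset B_2$, set $\chi_R(x) := \chi(x/R)$, pick $\eta \in C_c^\infty([0,T))$ equal to $1$ on $[0,T']$, and let $\varphi_R(x,t) := \Phi(x,t)\chi_R(x)\eta(t) \in C_0^\infty(\mathbb{R}^n \times [0,T))$.

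Inserting $\varphi_R$ in the weak formulation \eqref{WS} and using $u_0 \equiv 0$ together with $\eta \equiv 1$ on $[0,T']$ (so that $\partial_t \varphi_R = \chi_R \partial_t \Phi$ there), the ``bulk'' contribution cancels by the adjoint equation for $\Phi$ and only a commutator survives:
$$\mathcal{E}_R(x,t) := (-\Delta)^{\alpha/2}(\Phi\chi_R)(x,t) - \chi_R(x)(-\Delta)^{\alpha/2}\Phi(x,t).$$
The standard product rule for $(-\Delta)^{\alpha/2}$ gives
$$\mathcal{E}_R(x,t) = \Phi(x,t)(-\Delta)^{\alpha/2}\chi_R(x) - C(n,\alpha)\int_{\mathbb{R}^n} \frac{(\Phi(x,t) - \Phi(y,t))(\chi_R(x) - \chi_R(y))}{|x - y|^{n+\alpha}}\,dy.$$
For $R$ large, $\chi_R \equiv 1$ on $\mathrm{supp}\,\psi$, so \eqref{WS} collapses to
$$\int_{\mathbb{R}^n} u(x,T')\psi(x)\,dx = \int_0^{T'}\!\!\int_{\mathbb{R}^n} u(x,t)\,\mathcal{E}_R(x,t)\,dx\,dt,$$
and the whole task reduces to showing that the right-hand side vanishes as $R \to \infty$.

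For this I would use the known pointwise decay $|P_\tau(y)| \lesssim \tau\,|y|^{-(n+\alpha)}$ of the fractional heat kernel, which combined with the compact support of $\psi$ yields $|\Phi(y,t)| \lesssim (T'-t)(1+|y|)^{-(n+\alpha)}$ and analogous bounds for $(-\Delta)^{\alpha/2}\Phi$; for the cutoff, scaling gives $\|(-\Delta)^{\alpha/2}\chi_R\|_\infty \lesssim R^{-\alpha}$ with support essentially in $B_{2R}$, and the integrand of the cross term in $\mathcal{E}_R$ is supported where $\chi_R(x) \neq \chi_R(y)$, i.e.\ $\max(|x|,|y|) \gtrsim R$. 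Combining these bounds with the $L^1([0,T'], \mathcal{L}^{\alpha/2})$ integrability of $u$ and splitting both the $x$- and $y$-integrals into the regions $\ll R$, $\sim R$, $\gg R$, one should obtain $\int_0^{T'}\!\int|u|\,|\mathcal{E}_R|\,dx\,dt \to 0$. Once this is established, the identity forces $\int u(x,T')\psi(x)\,dx = 0$ for all $\psi \in C_c^\infty$, hence $u(\cdot,T') = 0$ a.e., and the arbitrariness of $T'$ concludes the proof.

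The main obstacle is precisely this commutator estimate: the decay of $\Phi(y,t)$ and the weight $(1+|y|^{n+\alpha})^{-1}$ defining $\mathcal{L}^{\alpha/2}$ have \emph{exactly} the same rate at infinity, so the estimate is borderline and leaves no slack; closing it requires carefully exploiting the $R^{-\alpha}$ gain from the fractional derivative of the cutoff and the small-time factor $(T'-t)$ coming from the kernel. This is the delicate ``fine integral estimates'' part alluded to in the theorem's preamble, and it is the reason nonlocality makes the argument substantially harder than the classical case, where the Gaussian kernel decays superpolynomially and the analogous cutoff error is trivially negligible.
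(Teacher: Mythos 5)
Your strategy coincides with the paper's: test against $\Phi(x,t)=(P_{t_0-t}\ast\theta)(x)$ multiplied by a spatial cutoff $\chi_R$, use the bilinear product rule for $(-\Delta)^{\alpha/2}$ to isolate $\Phi(-\Delta)^{\alpha/2}\chi_R$ and the bilinear remainder $B(\Phi,\chi_R)$, and send $R\to\infty$. The setup, including the decay $|\Phi(x,t)|\lesssim (1+|x|^{n+\alpha})^{-1}$ and the scaling bound $\|(-\Delta)^{\alpha/2}\chi_R\|_\infty\lesssim R^{-\alpha}$, is exactly what the paper establishes, and the reduction to the commutator limit is correct.

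However, your proposal stops precisely where the proof actually lives. You acknowledge the commutator estimate is ``the main obstacle'' and then assert ``one should obtain'' the vanishing limit without carrying it through, and the paper's treatment shows this is far from routine. After splitting $\mathbb{R}^{2n}$ by the support structure of $\chi_R(x)-\chi_R(y)$, the off-diagonal pieces (one of $|x|,|y|$ $\ll R$, the other $\gtrsim R$) indeed close with either a direct $R^{-\alpha}$ gain or, for the region $|x|>R/2$, $|y|\le R/4$, by tail integrability of $|u|/|x|^{n+\alpha}$ --- \emph{not} by an $R^{-\alpha}$ factor, which contradicts your claim that the whole argument ``leaves no slack.'' The genuinely delicate region is the diagonal annulus $R/4\le |x|,|y|\le 2R$, and there a single pointwise bound on $\Phi$ is insufficient when $\alpha\ge 1$: the integrand $|x-y|^{-(n+\alpha)}\cdot|\chi_R(x)-\chi_R(y)|\lesssim R^{-1}|x-y|^{1-n-\alpha}$ is not locally integrable in $y$ once $\alpha\ge 1$. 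The paper resolves this by bringing in the additional gradient estimate $|\nabla\Phi(x,t)|\lesssim(1+|x|^{n+\alpha})^{-1}$ (derived by applying the kernel bounds to $\nabla\theta$), applying the mean value theorem to $\Phi(x,t)-\Phi(y,t)$ on the near-diagonal set $|x-y|\le R/100$ to gain an extra power of $|x-y|$, and treating the far-diagonal set $|x-y|>R/100$ separately; this yields $R^{1-\alpha}+R^{-\alpha}$ rather than $R^{-\alpha}$ alone, which still suffices but is a different mechanism. Without this case split $\alpha\in[0,1)$ versus $\alpha\in[1,2)$ and the gradient bound, the diagonal piece does not close, so your sketch has a genuine gap rather than a deferred routine computation. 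Two further small imprecisions: $(-\Delta)^{\alpha/2}\chi_R$ is not supported in $B_{2R}$ (it is nonlocal and nonzero everywhere), only uniformly $O(R^{-\alpha})$; and the $(T'-t)$ factor from the kernel bound is never actually needed --- the paper simply bounds it by $t_0$.
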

\begin{proof}
We must show that $u(x,t_0)=0$ for an arbitrary $t_0\in (0,T)$ and $x\in \mathbb R^n$. For this,
we fix $R_0>0$ and $\theta\in \mathcal{C}_{0}^{\infty}(B_{R_0})$ and we will prove that
$$\int_{\mathbb{R}^{n}}{u(x,t_0)\theta(x)\, dx}=0.$$
For any $t\in [0,t_0)$, we define
$$\varphi(x,t):=(\theta(\cdot)\ast P_{t_{0}-t}(\cdot))(x),$$
where $P_t$ is the kernel defined in \eqref{nucleot} and \eqref{1.2BIS}.
By \cite{K} (see also \cite{B} and \cite{CR2}) we know that
\begin{equation}\label{P}
\frac{1}{C}\frac{1}{1+|x|^{n+\alpha}}\leq P(x)\leq \frac{C}{1+|x|^{n+\alpha}}.
\end{equation}
Therefore
$$\hat{\varphi}(\xi,t)=\hat{\theta}(\xi)e^{-(t_{0}-t)|\xi|^{\alpha}}=C(\xi)e^{t|\xi|^{\alpha}}.$$
Since
$$\hat{\varphi}_{t}(\xi,t)=|\xi|^{\alpha}\hat{\varphi}(\xi,t),$$
we have that
\begin{equation}
\left\{\begin{matrix}
    \varphi_t-(-\Delta)^{\alpha/2}\varphi=0 &\quad\mbox{for }
    (x,t)\in\mathbb{R}^{n}\times[0,t_0), \\
    \varphi(x,t_0)=\theta(x)&\quad\mbox{in } \mathbb{R}^{n}.
  \end{matrix}\right.\label{pvarphi}
\end{equation}
Now we claim that
\begin{equation}\label{prop1}
|\varphi(x,t)|=|\theta(x)\ast P_{t_0-t}(x)|\leq\frac{C_1}{1+|x|^{n+\alpha}},
\end{equation}
where $C_1$ depends of $n$, $\alpha$, $R_0$, $M:=\|\theta\|_{L^{\infty}(B_{R_0})}$ and $t_0$.
\\Indeed, considering without lost of generality that $2R_0>1$, we will distinguish two cases:

When $|x|\leq 2R_0$, using that $P_t$ is a summability kernel in $L^1$ we have that
\begin{eqnarray}
\left |\int_{\mathbb{R}^{n}}{P_{t_0-t}(y)\theta(x-y)\, dy}\right |&\leq&M\int_{\mathbb{R}^{n}}{P_{t_0-t}(y)\, dy}=M
\nonumber\\ [3mm]
&\leq&   M\,  \frac{1+(2R_0)^{n+\alpha}}{1+|x|^{n+\alpha}}\nonumber\\[3mm]
&\leq&\frac{{c_1}}{1+|x|^{n+\alpha}}.\label{extra0}
\end{eqnarray}
where $c_1=c_1(n,\, \alpha,\, R_0,\, M)$.

Consider now $|x|> 2R_0$. Note that from \eqref{nucleot} and \eqref{P}, we obtain that
\begin{equation}\label{ALM}
P_t(y)\le\frac{C}{t^{\frac{n}{\alpha}}
\left(1+\frac{|y|^{n+\alpha}}{t^{\frac{n+\alpha}{\alpha}}}\right)}
\le \frac{Ct}{|y|^{n+\alpha}}.
\end{equation}
Then, since for $|x-y|\leq R_0$ it follows that $|y|\geq \frac{|x|}{2}$, we have
$$P_{t_0-t}(y)\le\frac{2^{n+\alpha}C(t_0-t)}{|x|^{n+\alpha}}.$$
As a consequence,
\begin{eqnarray}
\left |\int_{\mathbb{R}^{n}}{P_{t_0-t}(y)\theta(x-y)\, dy}\right |&=&\left | \int_{|x-y|\leq R_0}{P_{t_0-t}(y)\theta(x-y)\, dy}\right |\nonumber\\
&\leq&2^{n+\alpha} C M\,|B_{R_0}|\frac{t_0-t}{|x|^{n+\alpha}}\nonumber\\
&\leq&2^{n+\alpha}2C M\,|B_{R_0}|\frac{t_0}{1+|x|^{n+\alpha}}\nonumber\\
&\leq&\frac{c_2}{1+|x|^{n+\alpha}},\label{extra2}
\end{eqnarray}
where
$c_2=c_2(n,\, \alpha,\, R_0,\, M,\, t_0)$
and $|B_{R_0}|$ denotes, as usual, the Lebesgue measure of the ball.
Hence, \eqref{prop1} follows
from \eqref{extra0} and \eqref{extra2}.

\

Also, applying \eqref{extra2} to the derivatives of
$\theta\in \mathcal{C}_{0}^{\infty}(B_{R_0})$, we also have
\begin{equation}\label{prop2}
|\nabla\varphi(x,t)|=|\nabla\theta(x)\ast P_{t_0-t}(x)|\leq\frac{C_2}{1+|x|^{n+\alpha}},
\end{equation}
where $C_2$ depends also of $n$, $\alpha$, $R_0$, $M':=\|\nabla\theta\|_{L^{\infty}(B_{R_0})}$ and $t_0$.

\

Then, from \eqref{prop1} and the fact that $u\in L^{1}([0,t_0],\mathcal{L}^{\alpha/2}(\mathbb{R}^{n}))$ we deduce that
\begin{equation}\label{prop3}
M_2:=\int_{0}^{t_0}{\int_{\mathbb{R}^{n}}{|u(x,t)\varphi(x,t)|\, dx\, dt}}<\infty.
\end{equation}
Let now $\phi\in \mathcal{C}^{\infty}(\mathbb{R})$ be such that \begin{equation}
\label{AA}
\chi_{B_{1/2}}\leq\phi\leq\chi_{B_1}.\end{equation}
For $R>2R_0$ we define $\phi_{R}(x)=\phi\left(\frac{x}{R}\right)$ and
$$\psi(x,t):=\varphi(x,t)\phi_{R}(x),$$
as a test function of problem \eqref{2.00}.
Also, by an explicit computation, one sees that,
for any
$f,\, g\in\mathcal{L}^{\alpha/2}(\mathbb{R}^{n})$,
$$(-\Delta)^{\alpha/2}(fg)(x)=f(x)(-\Delta)^{\alpha/2}g(x)+g(x)(-\Delta)^{\alpha/2}f(x)-B(f,g)(x)$$
with $B(f,g)$ the bilinear form given by
$$B(f,g)(x):=C(n,\alpha) \, \int_{\mathbb{R}^{n}}{\frac{(f(x)-f(y))(g(x)-g(y))}{|x-y|^{n+\alpha}}\, dy}.$$
Appling this formula (for a fixed $t$) to the functions $\varphi$ and $\phi_R$
and recalling \eqref{pvarphi}
we obtain
\begin{equation}\label{a.1} (-\Delta)^{\alpha/2} \psi=\varphi (-\Delta)^{\alpha/2}\phi_R +\phi_R\varphi_t
-B(\varphi,\phi_R).\end{equation}
Moreover, the function $u\psi$ evaluated  at $t_0$ is $u(x,t_0) \theta(x)\phi_R(x)$, thanks to
the terminal time condition in \eqref{pvarphi}.
Thus, considering $\psi$ as a test function in \eqref{2.00} and using that $\varphi$ is a solution of the problem \eqref{pvarphi} we obtain
\begin{eqnarray}
&&  \left | \int_{\mathbb{R}^{n}}{u(x,t_0)\theta(x)\phi_{R}(x)\, dx}\right |\\
&=&
\left | \int_{0}^{t_0}{\int_{\mathbb{R}^{n}}\left [u\varphi_{t}(x,t)\phi_{R}(x)-u(-\Delta)^{\alpha/2}(\psi(x,t))\right]\, dx\, dt}\right |\nonumber\\
&=&
\left |\int_{0}^{t_0}{\int_{\mathbb{R}^{n}}\left[uB(\varphi,\phi_{R})(x,t)-u\varphi(x,t)(-\Delta)^{\alpha/2}\phi_{R}(x)\right]\, dx\, dt} \right |\nonumber\\
&\leq&\int_{0}^{t_0}{\int_{\mathbb{R}^{n}}{|u\varphi(x,t)|\,|(-\Delta)^{\alpha/2}\phi_{R}(x)|\, dx\, dt}}\nonumber \\
&&\qquad+\int_{0}^{t_0}{\int_{\mathbb{R}^{n}}{|u(x,t)|\,|B(\varphi,\phi_{R})(x,t)|\, dx\, dt}}\nonumber
\end{eqnarray}
Then, since ${\theta}$ is supported in $B_{R_0}$ and $R_0<R/2$,
and recalling \eqref{AA},
we  conclude that
\begin{eqnarray}
\left |\int_{\mathbb{R}^{n}}{u(x,t_0)\theta(x)\, dx} \right|  &\leq&\int_{0}^{t_0}{\int_{\mathbb{R}^{n}}{|u\varphi(x,t)|
\,|(-\Delta)^{\alpha/2}\phi_{R}(x)|\, dx\, dt}}\nonumber\\
&+&\int_{0}^{t_0}{\int_{\mathbb{R}^{n}}{|u(x,t)|\,|B(\varphi,\phi_{R})(x,t)|\, dx\, dt}}\nonumber\\
&=:&I_1(R)+C(n,\alpha)\,I_2(R).\label{step2}
\end{eqnarray}
It remains to show that
$$\lim_{R\to \infty}{I_1(R)+I_{2}(R)}=0.$$
Indeed, since
$$|(-\Delta)^{\alpha/2}\phi_{R}(x)|=R^{-\alpha}\left|
\left((-\Delta)^{\alpha/2}\phi\right)\left(\frac{x}{R}\right)\right|\leq C_0R^{-\alpha},$$
by \eqref{prop3}, it follows that
$$
I_1(R)\leq C_0R^{-\alpha}\int_{0}^{t_0}{\int_{\mathbb{R}^{n}}{|u\varphi(x,t)|\, dx\, dt}}\leq C_0M_2R^{-\alpha}
$$
and so
\begin{equation}\label{step3}
\lim_{R\to\infty}{I_1(R)}=0.
\end{equation}
Now we are going to estimate $I_2(R)$.
For this we cover $\mathbb{R}^n\times\mathbb{R}^n$ with six domains suitably described
by the radii $R/4$, $R/2$, $R$ and $2R$ and represented  (for $n=1$) in the following picture:
\begin{center}
\begin{tikzpicture}
\def\R{2}
\coordinate (O) at (0,0);

\fill [color=black!20] (O)--(\R/2,0)--(\R/2,\R/2)--(0,\R/2)--cycle;

\fill [color=black!20] (\R,3*\R)--(\R,\R)--(3*\R,\R)--(3*\R,3*\R)--cycle;

\draw (-1,0)--(3*\R,0);
\draw (0,-1)--(0,3*\R);
\draw (O)--(3*\R,3*\R);
\draw (0,\R/2)--(\R/2,\R/2)--(\R/2,0);
\draw (\R/2,\R/4)--(3*\R,\R/4);
\draw (2*\R,\R/4)--(2*\R,\R);
\draw (\R,3*\R)--(\R,\R)--(3*\R,\R);
\draw (\R/4,\R/2)--(\R/4,3*\R);
\draw (\R/4,2*\R)--(\R,2*\R);

\draw ($(\R/4,0)+(0,-\R/25)$)--($(\R/4,0)+(0,+\R/25)$);
\draw (\R/4,0) node [below,yshift=-4pt] {$\frac R4$};

\draw ($(\R/2,0)+(0,-\R/25)$)--($(\R/2,0)+(0,+\R/25)$);
\draw (\R/2,0) node [below,yshift=-4pt] {$\frac R2$};

\draw ($(\R,0)+(0,-\R/25)$)--($(\R,0)+(0,+\R/25)$);
\draw (\R,0) node [below,yshift=-4pt] {$R$};

\draw ($(2*\R,0)+(0,-\R/25)$)--($(2*\R,0)+(0,+\R/25)$);
\draw (2*\R,0) node [below,yshift=-4pt] {$2R$};

\draw ($(0,\R/4)+(-\R/25,0)$)--($(0,\R/4)+(\R/25,0)$);
\draw (0,\R/4) node [left,xshift=-4pt] {$R/4$};

\draw ($(0,\R/2)+(-\R/25,0)$)--($(0,\R/2)+(\R/25,0)$);
\draw (0,\R/2) node [left,xshift=-4pt] {$R/2$};

\draw ($(0,\R)+(-\R/25,0)$)--($(0,\R)+(\R/25,0)$);
\draw (0,\R) node [left,xshift=-4pt] {$R$};

\draw ($(0,2*\R)+(-\R/25,0)$)--($(0,2*\R)+(\R/25,0)$);
\draw (0,2*\R) node [left,xshift=-4pt] {$2R$};

\draw (3*\R/2,\R/8) node {$A_1$} (\R/8,3*\R/2) node {$A_2$} (5*\R/2,\R/2) node {$A_3$} (\R/2,5*\R/2) node {$A_4$} (\R/2,\R) node {$A_5$} (\R,\R/2) node {$A_5$}  (3*\R/2,5*\R/2) node {$C$}   (5*\R/2,3*\R/2) node {$C$}  (3*\R/8,\R/8) node {$C$}   (\R/8,3*\R/8) node {$C$};

\end{tikzpicture}
\end{center}
Therefore
$$\mathbb{R}^{2n}=\left(\bigcup_{k=1}^{5}{A_{k}}\right)\cup C,$$
where
$$A_1:=\{(x,y):\, |x|> R/2,\, |y|\leq R/4\},\quad A_2:=\{(x,y):\, |x|\leq R/4,\, |y|> R/2\},$$
$$A_3:=\{(x,y):\, |x|\geq 2R,\, R/4<|y|< R\},\quad A_4:=\{(x,y):\, R/4<|x|< R,\, |y|\geq 2R \},$$
$$A_5:=\{(x,y):\, R/4<|x|< 2R,\, R/4<|y|< 2R\}$$
and
$$C:=\{(x,y):\, |x|\leq R/2, \,|y|\leq R/2\}\cup \{(x,y):\, |x|\geq R, |y|\geq R\}.$$

{F}rom \eqref{AA}, we know that $\phi_R(x)-\phi_R(y)=0$ if $(x,y)\in C$,
and so
\begin{eqnarray}
I_{2}(R)&=&\int_{0}^{t_0}{\int_{\mathbb{R}^{n}}{|u(x,t)|
\int_{\mathbb{R}^{n}}{\frac{|\varphi(x,t)-\varphi(y,t)|\,|
\phi_{R}(x)-\phi_{R}(y)|}{|x-y|^{n+\alpha}}\, dy}\, dx\, dt}}\nonumber\\
&\le&\sum_{k=1}^{5}{I_{2}^{A_k}(R)},\label{step4}
\end{eqnarray}
where
$$I_{2}^{A_k}(R)=\int_{0}^{t_0}{\int_{A_k}{|u(x,t)|\frac{|
\varphi(x,t)-\varphi(y,t)|\,|\phi_{R}(x)-\phi_{R}(y)|}{|x-y|^{n+\alpha}}\, dy}\, dx\, dt},$$
for $k=1,\ldots,5$.

We are going to estimate each of these five integral separately. For $(x,y)\in A_1$ we get that $|x-y|\geq C|x|$. Moreover by \eqref{prop1} it follows that
$$|\varphi(x,t)|+|\varphi(y,t)|\leq \frac{
C}{1+|y|^{n+\alpha}}.$$
Therefore
\begin{eqnarray}
I_{2}^{A_1}(R)
&\leq&
\int_{0}^{t_0}{\int_{|x|> R/2}{\frac{|u(x,t)|}{|x|^{n+\alpha}}\int_{|y|\leq R/4}{\frac{C}{1+|y|^{n+\alpha}}\, dy}\, dx\, dt}}\nonumber\\
&\leq&C\int_{0}^{t_0}{\int_{|x|> R/2}{\frac{|u(x,t)|}{|x|^{n+\alpha}}\, dx\, dt}}.\label{step5}
\end{eqnarray}
Following the same ideas, since for $(x,y)\in A_2$ we obtain that
\begin{equation}\label{nuevo}
|\varphi(x,t)|+|\varphi(y,t)|\leq \frac{
C}{|x|^{n+\alpha}},\quad \mbox{and}\quad |x-y|\geq C|y|,
\end{equation}
then
\begin{eqnarray}
I_{2}^{A_2}(R)
&\leq&
\int_{0}^{t_0}{\int_{|x|\leq R/4}{\frac{|u(x,t)|}{|x|^{n+\alpha}}\int_{|y|>R/2}{\frac{C}{|y|^{n+\alpha}}\, dy}\, dx\, dt}}\nonumber\\
&\leq&CR^{-\alpha}\int_{0}^{t_0}{\int_{|x|\le R/4}{\frac{|u(x,t)|}{|x|^{n+\alpha}}\, dx\, dt}}.\label{step6}
\end{eqnarray}
Also, since for $(x,y)\in A_3$ \eqref{nuevo} is satisfied, then
\begin{equation}\label{step7}
I_{2}^{A_3}(R)\leq CR^{-\alpha}\int_{0}^{t_0}{\int_{|x|\geq 2R}{\frac{|u(x,t)|}{|x|^{n+\alpha}}\, dx\, dt}}.
\end{equation}
Similarly,
using again the good decay of $\varphi$ and the fact that $|x-y|\ge C|y|$ for every
$(x,y)\in A_4$, we obtain that
\begin{equation}\label{step8}
I_{2}^{A_4}(R)\leq CR^{-\alpha}\int_{0}^{t_0}{\int_{R/4<|x|< R}{\frac{|u(x,t)|}{|x|^{n+\alpha}}\, dx\, dt}}.
\end{equation}
Then, using the Monotone Convergence Theorem, by \eqref{prop3} and the fact that $u\in L^{1}([0,t_0],\mathcal{L}^{\alpha/2}(\mathbb{R}^{n}))$, from \eqref{step5}, \eqref{step6}, \eqref{step7} and \eqref{step8} it follows that
\begin{equation}\label{IA1234}
\lim_{R\to\infty}I_{2}^{A_1}(R)=\lim_{R\to\infty}I_{2}^{A_2}(R)=\lim_{R\to\infty}
I_{2}^{A_3}(R)=\lim_{R\to\infty}I_{2}^{A_4}(R)=0.
\end{equation}
To estimate $I_{2}^{A_5}(R)$ we will treat separately the cases $\alpha\in[0,1)$ and $\alpha\in[1,2)$.

\

We start with the case $\alpha\in[0,1)$ and $(x,y)\in A_5$. Since in $A_5$
the roles of $x$ and $y$ are symmetric, we deduce from \eqref{prop1} that,
in this case,
\begin{equation}\label{b}
|\varphi(x,t)|+|\varphi(y,t)|\leq \frac{C}{|x|^{n+\alpha}}.
\end{equation}
Also
$$|\phi_{R}(x)-\phi_{R}(y)|\leq \frac{C}{R}|x-y|.$$
Thus
\begin{equation}\label{step9}
I_{2}^{A_5}(R)\leq \frac{C}{R}\int_{0}^{t_0}{\int_{\frac{R}{4}\leq|x|\leq 2R}{\frac{|u(x,t)|}{|x|^{n+\alpha}}\int_{\frac{R}{4}\leq|y|\leq 2R}{\frac{1}{|x-y|^{n+\alpha-1}}\, dy}\, dx\, dt}}.
\end{equation}
By the change of variables $\tilde y:=x-y$, it follows from \eqref{step9} that
\begin{eqnarray}
I_{2}^{A_5}(R)
&\leq&
\frac{C}{R}\int_{0}^{t_0}{\int_{\frac{R}{4}\leq|x|\leq 2R}{\frac{|u(x,t)|}{|x|^{n+\alpha}}\int_{\frac{R}{4}\leq|x-\tilde y|\leq 2R}{\frac{1}{|\tilde y|^{n+\alpha-1}}\, d\tilde y}\, dx\, dt}}\nonumber\\
&\leq&\frac{C}{R}\int_{0}^{t_0}{\int_{\frac{R}{4}\leq|x|\leq 2R}{\frac{|u(x,t)|}{|x|^{n+\alpha}}\int_{|\tilde y|\leq 4R}{\frac{1}{|\tilde y|^{n+\alpha-1}}\, d\tilde y}\, dx\, dt}}.\nonumber\\
&\leq&
CR^{-\alpha}\int_{0}^{t_0}{\int_{\frac{R}{4}\leq|x|\leq 2R}{\frac{|u(x,t)|}{|x|^{n+\alpha}}\, dx\, dt}}.\label{step10}
\end{eqnarray}
Therefore, using that $u\in L^{1}([0,t_0],\mathcal{L}^{\alpha/2}(\mathbb{R}^{n}))$,  we conclude that
\begin{equation}\label{step11}
\lim_{R\to\infty}I_{2}^{A_5}(R)=0,\, \qquad{\mbox{ when }}\alpha\in[0,1).
\end{equation}
We consider now the case $\alpha\in[1,2)$. By \eqref{prop2}, we get that
\begin{equation}\label{step111}
|\varphi(x,t)-\varphi(y,t)|\leq\frac{C}{1+|z|^{n+\alpha}}|x-y|,
\end{equation}
for some $z$  in the segment joining $x$ and $y$.
We take the family
$$Q:=\left\{(x,y)\in A_5:\, |x-y|\leq \frac{R}{100}\right\}.$$
Note that, if $(x,y)\in Q$ then every point $z$ lying on the segment from $x$
to $y$ verifies $|z|\geq C|x|$. Hence,  \eqref{step111} and the previous estimate for $\phi_R$, gives that
\begin{eqnarray}
&&\int_{0}^{t_0}{\int_{(x,y)\in Q}{|u(x,t)|\frac{|(\phi_{R}(x)-\phi_{R}(y))(\varphi(x,t)-\varphi(y,t))|}{|x-y|^{n+\alpha}}\, dy\, dx\, dt}}\nonumber\\
&\leq&\int_{0}^{t_0}{\int_{(x,y)\in Q}{|u(x,t)|\frac{C}{R|x|^{n+\alpha}|x-y|^{n+\alpha-2}}\, dy\, dx\, dt}}\nonumber\\
&\leq&\frac{C}{R}\int_{0}^{t_0}{\int_{\frac{R}{4}\leq|x|\leq 2R}{\frac{|u(x,t)|}{|x|^{n+\alpha}}\int_{\frac{R}{4}\leq|y|\leq 2R}{\frac{1}{|x-y|^{n+\alpha-2}}\, dy}\, dx\, dt}}\nonumber\\
&\leq&\frac{C}{R}\int_{0}^{t_0}{\int_{\frac{R}{4}\leq|x|\leq 2R}{\frac{|u(x,t)|}{|x|^{n+\alpha}}\int_{\frac{R}{4}\leq|x-\tilde y|\leq 2R}{\frac{1}{|\tilde y|^{n+\alpha-2}}\, d\tilde y}\, dx\, dt}}\nonumber\\
&\leq&CR^{1-\alpha}\int_{0}^{t_0}{\int_{\frac{R}{4}\leq|x|\leq 2R}{\frac{|u(x,t)|}{|x|^{n+\alpha}}\, dx\, dt}}.\label{step12}
\end{eqnarray}
On the other hand, if 
$(x,y)\in A_5\setminus Q$ we have that
\begin{equation}\label{c}
|x-y|>\frac{R}{100}\geq C|y|.
\end{equation}
Then by \eqref{b} and \eqref{c} it follows that
\begin{eqnarray}
&&\int_{0}^{t_0}{\int_{(x,y)\in A_5\setminus Q}{|u(x,t)|\frac{|(\phi_{R}(x)-\phi_{R}(y))(\varphi(x,t)-\varphi(y,t))|}{|x-y|^{n+\alpha}}\, dy\, dx\, dt}}\nonumber\\
&\leq&\frac{C}{R}\int_{0}^{t_0}{\int_{(x,y)\in A_5\setminus Q}{|u(x,t)|\frac{|(\varphi(x,t)-\varphi(y,t))|}{|x-y|^{n+\alpha-1}}\, dy\, dx\, dt}}\nonumber\\
&\leq&\frac{C}{R}\int_{0}^{t_0}{\int_{(x,y)\in A_5\setminus Q}{\frac{|u(x,t)|}{|x|^{n+\alpha}}\frac{1}{|y|^{n+\alpha-1}}\, dy\, dx\, dt}}\nonumber\\
&\leq&\frac{C}{R}\int_{0}^{t_0}{\int_{\frac{R}{4}\leq|x|\leq 2R}{\frac{|u(x,t)|}{|x|^{n+\alpha}}\int_{\frac{R}{4}\leq|y|\leq 2R}{\frac{1}{|y|^{n+\alpha-1}}\, dy}\, dx\, dt}}\nonumber\\
&\leq&CR^{-\alpha}\int_{0}^{t_0}{\int_{\frac{R}{4}\leq|x|\leq 2R}{\frac{|u(x,t)|}{|x|^{n+\alpha}}\, dx\, dt}}.\label{step13}
\end{eqnarray}
Therefore, from \eqref{step12} and \eqref{step13}
\begin{eqnarray}
I_{2}^{A_5}(R)&\leq&\int_{0}^{t_0}{\int_{(x,y)\in Q}{|u(x,t)|\frac{|(\phi_{R}(x)-\phi_{R}(y))(\varphi(x,t)-\varphi(y,t))|}{|x-y|^{n+\alpha}}\, dy\, dx\, dt}}\nonumber\\
&+&\int_{0}^{t_0}{\int_{(x,y)\in A_5\setminus Q}{|u(x,t)|\frac{|(\phi_{R}(x)-\phi_{R}(y))(\varphi(x,t)-\varphi(y,t))|}{|x-y|^{n+\alpha}}\, dy\, dx\, dt}}\nonumber\\
&\leq&CR^{1-\alpha}\int_{0}^{t_0}{\int_{\frac{R}{4}\leq|x|\leq 2R}{\frac{|u(x,t)|}{|x|^{n+\alpha}}\, dx\, dt}}\nonumber\\
&+&CR^{-\alpha}\int_{0}^{t_0}{\int_{\frac{R}{4}\leq|x|\leq 2R}{\frac{|u(x,t)|}{|x|^{n+\alpha}}\, dx\, dt}}.
\end{eqnarray}
Then, since $u\in L^{1}([0,t_0],\mathcal{L}^{\alpha/2}(\mathbb{R}^{n}))$, using the Monotone Convergence Theorem we obtain
\begin{equation}\label{step14}
\lim_{R\to\infty} I_{2}^{A_5}(R)=0,\, \qquad{\mbox{ when }}\alpha\in[1,2).
\end{equation}
That is, by \eqref{step11} and \eqref{step14}, we get
\begin{equation}\label{IA5}
\lim_{R\to\infty}|I_{2}^{A_5}(R)|=0,\, \qquad{\mbox{ whenever }}\alpha\in(0,2).
\end{equation}
Putting together \eqref{step4},  \eqref{IA1234} and \eqref{IA5} it follows that
\begin{equation}\label{step15}
\lim_{R\to\infty}I_{2}(R)=0,\, \qquad{\mbox{ when }}\alpha\in(0,2).
\end{equation}
Therefore, from \eqref{step2}, by \eqref{step3} and \eqref{step15} we conclude that
$$\lim_{R\to\infty}\int_{\mathbb{R}^{n}}{u(x,t_0)\theta(x)\, dx}=0,\, $$
for an arbitrary $\theta\in \mathcal{C}_{0}^{\infty}(B_{R_0}), \, R_0<2R.$
\end{proof}
\section{Uniqueness for strong positive solutions}\label{postrong}
In this section we will establish the representation of the positive
strong solutions of the fractional heat equation as the Poisson integral of the initial value. That is
\begin{theorem}\label{Th2}
Let $(x,t)\in\mathbb{R}^{n }\times(0,T)$. If $u(x,t)\geq0$ is a strong solution of the fractional heat equation, then
$$\int_{\mathbb{R}^{n}}{P_{t}(x-y)u(y,0)\, dy}=u(x,t).$$
\end{theorem}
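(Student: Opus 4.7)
The aim is to show that $u$ coincides with $v(x,t) := \int_{\mathbb{R}^{n}} P_{t}(x-y) u(y,0)\,dy$. My strategy proceeds in four steps: (i) prove the one-sided inequality $u \ge v$ via comparison against bounded truncations of the initial datum; (ii) verify that $u$ meets the growth requirement to qualify as a weak solution; (iii) observe that $v$ is itself a weak solution with initial datum $u(\cdot,0)$; (iv) apply the weak uniqueness Theorem \ref{Th1} to the difference $u - v$.

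For (i), I would approximate $u(\cdot,0)$ from below by a monotone sequence $\phi_k(y) := u(y,0)\chi_k(y)$, where $\chi_k \in \mathcal{C}^{\infty}_{0}(\mathbb{R}^{n})$ satisfies $0 \le \chi_k \nearrow 1$; each $\phi_k$ is then non-negative, continuous and compactly supported. Its Poisson integral $v_k(x,t) := (P_t \ast \phi_k)(x)$ is a bounded, smooth classical (hence strong) solution with initial value $\phi_k$. The difference $w_k := u - v_k$ is a strong solution with $w_k(\cdot, 0) = u(\cdot,0)(1-\chi_k) \ge 0$ and $w_k \ge -\|\phi_k\|_{L^{\infty}(\mathbb{R}^{n})}$. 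The crux is a comparison principle asserting that any strong solution of \eqref{eq:base} bounded below and with non-negative initial trace must itself be non-negative; applied to $w_k$ this yields $u \ge v_k$, and monotone convergence then produces $u \ge v$.

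For (ii), I need $u \in L^{1}([0,T'],\mathcal{L}^{\alpha/2}(\mathbb{R}^{n}))$ for every $T' < T$. Here a scaling/time-translation argument enters: the bound from (i), applied with the time origin shifted to any $t_0 \in (0,T)$, gives
\[
u(x, t_0 + s) \ge \int_{\mathbb{R}^{n}} P_{s}(x-y)\, u(y, t_0)\, dy, \qquad s \in (0, T-t_0).
\]
Fixing $x=0$ and a convenient $s>0$, and using \eqref{P} in the form $P_s(-y) \ge c_s/(1+|y|^{n+\alpha})$, this reduces to $u(0,t_0+s) \ge c_s \|u(\cdot,t_0)\|_{\mathcal{L}^{\alpha/2}(\mathbb{R}^{n})}$. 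Since $u$ is continuous on $\mathbb{R}^{n}\times[0,T)$, the left-hand side is locally bounded in $t_0$, giving a uniform bound on $\|u(\cdot,t_0)\|_{\mathcal{L}^{\alpha/2}}$ as $t_0$ ranges over any compact subinterval of $[0,T)$; this is more than enough for the integrability condition. A direct verification using the smoothness and convolution structure of $P_t$ shows that $v$ is likewise a weak solution with initial datum $u(\cdot,0)$.

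Step (iv) is then immediate: by linearity $u - v$ is a weak solution of \eqref{eq:base} with identically zero initial datum, so Theorem \ref{Th1} forces $u \equiv v$. The principal obstacle is the comparison principle of step (i): since no a priori upper bound on $u$ is available, one cannot invoke a bounded-maximum principle directly, and a careful cutoff argument -- exploiting the boundedness of $v_k$, the tail decay encoded in \eqref{P}, and the non-local structure of $(-\Delta)^{\alpha/2}$ -- is needed to rule out negative interior minima of $w_k$. Everything else is essentially bookkeeping once this technical heart is in place.
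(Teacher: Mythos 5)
Your four-step outline is structurally the paper's own strategy, and steps (ii)--(iv) are sound: step (ii) is the content of Corollaries \ref{corol3} and \ref{corol4}, obtained exactly as you describe from the time-shifted one-sided inequality together with the two-sided kernel bound \eqref{P}; step (iii) holds because $p(x,t):=\int_{\mathbb{R}^n}P_t(x-y)u(y,0)\,dy$ is itself a strong solution and, once step (i) is in hand, $0\le p\le u$, so $p$ inherits the $L^1\big([0,T'],\mathcal{L}^{\alpha/2}(\mathbb{R}^n)\big)$ bound; step (iv) is the invocation of Theorem \ref{Th1} applied to $w=u-p\ge 0$.

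The genuine gap is step (i). The comparison principle you rely on --- that a strong solution which is bounded below and has non-negative initial trace must be non-negative --- is not an available off-the-shelf fact. It is a Phragm\'en--Lindel\"of statement on an unbounded spatial domain, and the a priori bound $w_k\ge-\|\phi_k\|_{L^\infty}$ does nothing to prevent the infimum of $w_k$ from being approached only at spatial infinity, which is exactly where a naive maximum-principle argument breaks. You acknowledge this as the ``technical heart'' and correctly name the ingredients (cutoff, tail decay, nonlocal structure), but carrying out that reduction is essentially the entire content of the step, not bookkeeping. Concretely, the paper proceeds in two moves. First, Lemma \ref{weak} is a maximum principle on a \emph{bounded} cylinder $D_T=\Omega\times(0,T)$: if $v\in\mathcal{C}(\overline{\Omega}\times[0,T))$ satisfies $v_t+(-\Delta)^{\alpha/2}v\le 0$ in $D_T$ and $v\le 0$ on $(\mathbb{R}^n\times[0,T))\setminus D_T$, then $v\le 0$; here compactness forces the maximum to be attained, and at that point the nonlocal term picks up a strictly positive contribution from the exterior region where $v\le 0$. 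Second, Lemma \ref{menor} compares $u$ against $v_R:=P_t\ast(\phi_R\,u(\cdot,0))$: because $\phi_R u(\cdot,0)$ is supported in $B_R$, the decay estimate \eqref{ALM} yields $0\le v_R(x,t)\le C(T,M_R,n)\,R^n/(|x|-R)^{n+\alpha}$ for $|x|>R$, so for every $\epsilon>0$ there is a radius $\rho=\rho(\epsilon,R)$ with $v_R\le\epsilon$ on $(\mathbb{R}^n\setminus B_\rho)\times(0,T)$. The function $v_R-u-\epsilon$ is then $\le 0$ off the bounded cylinder $B_\rho\times(0,T)$, and Lemma \ref{weak} gives $v_R\le u+\epsilon$ everywhere; letting $\epsilon\to0$ and then $R\to\infty$ (monotone convergence) yields $u\ge p$. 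The $\epsilon$-shift is what converts the comparison on an unbounded domain, which you cannot perform directly, into one on a bounded cylinder where the attained-maximum argument is elementary. If you spell out the cutoff you gesture at, this is precisely what it must become.
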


To prove this theorem we will need some previous results that we present as follows.
First of all, we establish that, among all possible positive solutions of the fractional heat equation, the smallest one is given by a formula that involves the convolution with the fractional heat kernel (see Lemma \ref{menor}). To prove it we will use  the following
\begin{lemma}[A maximum principle]\label{weak}
Set $D_{T}:=\Omega\times (0,T)$ and let $v(x,t)\in \mathcal{C}(\overline{\Omega}\times [0,T))$  satisfy, pointwise, the following problem
\begin{equation}\label{o0}
\left\{\begin{array}{ll}
    v_t+(-\Delta)^{\alpha/2}v\leq0 &\quad\mbox{for }
    (x,t)\in D_{T}, \\
    v(x,t)\leq 0&\quad\mbox{in } \big(\mathbb{R}^{n}\times[0,T)\big)\setminus D_{T}.
  \end{array}\right.
\end{equation}
Then $v\le0$ in $\overline{\Omega}\times [0,T)$.
\end{lemma}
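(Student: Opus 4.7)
The plan is the classical maximum-principle argument (contradiction plus a small linear-in-$t$ perturbation) adapted to the nonlocal setting. Suppose, for contradiction, that $v(x^*,t^*)>0$ at some $(x^*,t^*)\in\overline{\Omega}\times[0,T)$; by the hypothesis on the complement of $D_T$, necessarily $t^*>0$ and $x^*\in\Omega$. For a parameter $\varepsilon>0$, introduce the shifted function
$$
w_\varepsilon(x,t):=v(x,t)-\varepsilon t.
$$
Since the principal-value fractional Laplacian annihilates functions that are constant in $x$, one has $(-\Delta)^{\alpha/2}w_\varepsilon=(-\Delta)^{\alpha/2}v$ pointwise, whence
$$
(w_\varepsilon)_t(x,t)+(-\Delta)^{\alpha/2}w_\varepsilon(x,t)=v_t(x,t)+(-\Delta)^{\alpha/2}v(x,t)-\varepsilon\le-\varepsilon<0\qquad\text{in }D_T.
$$
Choose $\varepsilon<v(x^*,t^*)/t^*$, so that $w_\varepsilon(x^*,t^*)>0$ as well.

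Next, fix any $t_1\in(t^*,T)$ and, assuming first that $\Omega$ is bounded, note that by continuity $w_\varepsilon$ attains its maximum $M>0$ on the compact set $\overline{\Omega}\times[0,t_1]$ at some point $(x_0,t_0)$. This maximum cannot lie on $\overline{\Omega}\times\{0\}$, where $w_\varepsilon(x,0)=v(x,0)\le 0$, and it cannot lie on $\partial\Omega\times(0,t_1]$, where $w_\varepsilon(x,t)=v(x,t)-\varepsilon t\le-\varepsilon t<0$ (using $\partial\Omega\subset\mathbb{R}^n\setminus\Omega$). Hence $(x_0,t_0)\in\Omega\times(0,t_1]\subset D_T$.

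At $(x_0,t_0)$ I would verify the two standard sign conditions. First, $(w_\varepsilon)_t(x_0,t_0)\ge 0$: if $t_0<t_1$ this is immediate, and if $t_0=t_1$ it follows from the one-sided difference quotient and the continuity of $v_t$ guaranteed by the strong-solution hypothesis. Second, one has the strict bound
$$
(-\Delta)^{\alpha/2}w_\varepsilon(x_0,t_0)=C(n,\alpha)\,\mathrm{P.V.}\!\int_{\mathbb{R}^n}\frac{w_\varepsilon(x_0,t_0)-w_\varepsilon(y,t_0)}{|x_0-y|^{n+\alpha}}\,dy>0,
$$
because $w_\varepsilon(\cdot,t_0)$ attains a \emph{global} maximum at $x_0$ over all of $\mathbb{R}^n$: inside $\Omega$ by the choice of $(x_0,t_0)$, and outside $\Omega$ because $w_\varepsilon(y,t_0)\le-\varepsilon t_0<M$ for every $y\notin\Omega$. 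The integrand is therefore nonnegative everywhere and strictly positive on $\mathbb{R}^n\setminus\Omega$, a set of positive measure. Adding the two inequalities yields $(w_\varepsilon)_t+(-\Delta)^{\alpha/2}w_\varepsilon>0$ at $(x_0,t_0)$, contradicting the strict negativity proved above, and the proof is complete.

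The main obstacle is exactly the strict positivity of $(-\Delta)^{\alpha/2}w_\varepsilon$ at an interior spatial maximum: unlike in the local parabolic case, one cannot argue only in a neighborhood of $x_0$, and the conclusion genuinely relies on the exterior sign condition $v\le 0$ on $(\mathbb{R}^n\setminus\Omega)\times[0,T)$, which is the true nonlocal ingredient. If $\Omega$ is unbounded, the same scheme works provided $v$ has some mild decay at infinity (ensuring the supremum is attained); alternatively one exhausts $\Omega$ by bounded subdomains, applies the bounded case, and passes to the limit. The $\varepsilon t$ perturbation plays the double role of turning the non-strict parabolic inequality into a strict one and of preventing the maximum from escaping to $t=0$ or to $\partial\Omega$.
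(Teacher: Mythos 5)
Your argument is correct and follows essentially the same route as the paper: take the maximum on a compact slab, locate it in the interior via the boundary and initial sign conditions, and then derive a contradiction from the strict positivity of the P.V.\ integral, which comes from the nonpositivity of $v$ outside $\Omega$ combined with $v(x_0,t_0)>0$. The auxiliary perturbation $-\varepsilon t$ is superfluous here: unlike in the local case, where the Laplacian at an interior maximum is only $\le 0$, the exterior sign condition already forces the nonlocal term to be \emph{strictly} positive at the maximum, so the paper dispenses with the perturbation and gets the contradiction directly from $v_t(x_0,t_0)\ge 0$ and $(-\Delta)^{\alpha/2}v(x_0,t_0)>0$.
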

\begin{proof}
Fixing an arbitrary $T'\in(0,T)$, we define
$$v(x_0,t_0):=\max_{\overline{\Omega}\times [0,T']} v(x,t).$$
Our goal is to show that $v(x_0,t_0)\le0$.
The proof is by contradiction, assuming that
\begin{equation}\label{88}
v(x_0,t_0)>0.\end{equation}
Then, $(x_0,t_0)$ cannot lie in $(\partial\Omega\times[0,T))\cup (\Omega\times\{0\})$, since $v\leq 0$ there, thanks to the boundary conditions
in \eqref{o0}. As a consequence,
$(x_0,t_0)$ lies in $\Omega\times(0,T']$ and therefore $v_{t}(x_0,t_0)=0$. Therefore the equation in \eqref{o0} implies that
\begin{eqnarray*}
0&\geq&(-\Delta)^{\alpha/2}v(x_0,t_0)=C_{n,\alpha} \mbox{P.V.}\int_{\mathbb{R}^{n}}{\frac{v(x_{0},t_{0})-v(y,t_{0})}{|y-x_0|^{n+\alpha}}\, dy}\\
&=&C_{n,\alpha}\Big(\mbox{P.V.}\int_{\Omega}{\frac{v(x_{0},t_{0})-v(y,t_{0})}{|y-x_0|^{n+\alpha}}\, dy}
+\int_{\mathbb{R}^{n}\setminus\Omega}{\frac{v(x_{0},t_{0})-v(y,t_0)}{|y-x_0|^{n+\alpha}}\, dy}\Big)\\
&\geq&C_{n,\alpha} \mbox{P.V.}\int_{\mathbb{R}^{n}\setminus\Omega}{\frac{v(x_{0},t_{0})-v(y,t_0)}{|y-x_0|^{n+\alpha}}\, dy}.
\end{eqnarray*}
Since $v(y,t_0)\leq0$ for $y\in\mathbb{R}^{n}\setminus\Omega$,
thanks to \eqref{o0}, we obtain that the latter integrand is strictly positive,
due to \eqref{88}, and this
is a contradiction.
\end{proof}


Now we are able to prove that strong solutions are upper bounds
for the kernel convolutions:


\begin{lemma}\label{menor}
Let $(x,t)\in\mathbb{R}^{n }\times(0,T)$. If $u(x,t)\geq0$ is a strong solution of the fractional heat equation then
\begin{equation}\label{cFs}
I:=\int_{\mathbb{R}^{n}}{P_{t}(x-y)u(y,0)\, dy}\leq u(x,t),
\end{equation}
where $P_{t}(x)$ is the function defined in \eqref{nucleot}.
\end{lemma}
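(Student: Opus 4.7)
The plan is to approximate $I(x,t)$ from below by globally defined solutions with compactly supported initial data, and to bound each of them by $u$ via the maximum principle (Lemma~\ref{weak}). Fix a family of cutoffs $\eta_R\in C^\infty_c(\mathbb R^n)$ with $0\le\eta_R\le 1$, $\eta_R\equiv 1$ on $B_R$, $\eta_R\equiv 0$ outside $B_{2R}$, chosen non-decreasing in $R$. Set $\phi_R:=\eta_R\,u(\cdot,0)$ and
\[
w_R(x,t):=\int_{\mathbb R^n}P_t(x-y)\,\phi_R(y)\,dy.
\]
Since $\phi_R$ is non-negative, bounded, continuous and supported in $\overline{B_{2R}}$, and $P_t$ is smooth for $t>0$, one has $w_R\in C^\infty(\mathbb R^n\times(0,\infty))\cap C(\mathbb R^n\times[0,\infty))$, with $(w_R)_t+(-\Delta)^{\alpha/2}w_R=0$ pointwise and $w_R(\cdot,0)=\phi_R$.

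For the comparison, fix $(x_0,t_0)\in\mathbb R^n\times(0,T)$, pick $T'\in(t_0,T)$, and set $v:=w_R-u-\epsilon$ with $\epsilon>0$. Both $w_R$ and $u$ solve the equation pointwise, hence $v_t+(-\Delta)^{\alpha/2}v=0$. At $t=0$,
\[
v(x,0)=(\eta_R(x)-1)\,u(x,0)-\epsilon\le-\epsilon\le 0,
\]
since $\eta_R\le 1$ and $u(\cdot,0)\ge 0$. The decisive point is that $w_R$ decays at spatial infinity uniformly in $t\in[0,T']$: by~\eqref{ALM} one has $P_t(z)\le C\,t/|z|^{n+\alpha}$, and for $|x|\ge 4R$ and $y\in\mathrm{supp}\,\phi_R\subset B_{2R}$ the triangle inequality gives $|x-y|\ge|x|/2$, so
\[
w_R(x,t)\le\|\phi_R\|_\infty\int_{B_{2R}}\frac{C\,t}{|x-y|^{n+\alpha}}\,dy\le \frac{C_{R,T'}}{|x|^{n+\alpha}},\qquad t\in[0,T'].
\]
For each $\epsilon>0$ one can then pick $M=M(\epsilon,R,T')$ so large that $w_R(x,t)\le\epsilon$ for all $|x|\ge M$, $t\in[0,T']$; using $u\ge 0$ this yields $v\le w_R-\epsilon\le 0$ on $(\mathbb R^n\setminus B_M)\times[0,T')$. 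Lemma~\ref{weak} applied with $\Omega=B_M$ then gives $v\le 0$ on $\overline{B_M}\times[0,T')$, so in particular $w_R(x_0,t_0)\le u(x_0,t_0)+\epsilon$, and letting $\epsilon\to 0$ one obtains $w_R(x_0,t_0)\le u(x_0,t_0)$.

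Finally, as $R\to\infty$ the cutoffs satisfy $\eta_R\uparrow 1$, so $\phi_R\uparrow u(\cdot,0)$ pointwise, and the Monotone Convergence Theorem yields $w_R(x,t)\uparrow I(x,t)$ for every fixed $(x,t)$. Combined with $w_R\le u$ this produces $I(x,t)\le u(x,t)$, which in passing also shows that $I$ is automatically finite.

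The main obstacle is producing the uniform-in-$t$ spatial decay of $w_R$ on $[0,T']$: because the fractional kernel decays only polynomially, one must rely on~\eqref{ALM} rather than any exponential decay (as in the classical case), and absorb a little room through the auxiliary $\epsilon$, which is removed at the end. Everything else is essentially routine: the pointwise equation plus $u\ge 0$ plus the decay of $P_t$ feed into Lemma~\ref{weak}, and the monotone limit in $R$ takes care of the passage from $\phi_R$ to $u(\cdot,0)$.
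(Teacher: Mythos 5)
Your proof is correct and follows essentially the same approach as the paper's: cut off the initial data, form the convolution solution, show it decays uniformly in $t$ for large $|x|$ via \eqref{ALM}, compare with $u+\epsilon$ using the maximum principle (Lemma~\ref{weak}), then send $\epsilon\to 0$ and $R\to\infty$ by monotone convergence. The only cosmetic difference is your use of a smooth cutoff $\eta_R$ supported in $B_{2R}$ in place of the paper's piecewise-linear cutoff supported in $B_R$.
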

\begin{proof}
First of all, observe that the integral $I=I(x,t)$ exists, for is given by the integration of two (measurable) positive functions, although  we do not know a priori  that $I$ is finite. However, this will be a consequence of our result that gives the inequality  $I\leq u(x,t)$ for every $(x,t)\in\mathbb{R}^{n}\times[0,T)$. To this aim, we let
\begin{equation}
\phi_{R}(x):=\left\{\begin{array}{ll}
    1, &\quad |x|\leq R-1, \\
    R-|x|,&\quad R-1\leq|x|\leq R,\\
    0,&\quad |x|>R.
  \end{array}\right.
\end{equation}
We define
$$v_{R}(x,t):=\int_{\mathbb{R}^{n}}{P_{t}(x-y)\phi_{R}(y)u(y,0)\, dy}=(P_{t}(\cdot)\ast \phi_{R}u(\cdot,0))(x).$$
Then
$$
\left\{\begin{array}{ll}
    \frac{\partial v_{R}}{\partial t}+(-\Delta)^{\alpha/2}v_{R}=0 &\quad\mbox{for }
    (x,t)\in\mathbb{R}^{n}\times(0,T), \\
    v_{R}(x,t)\geq 0 &\quad\mbox{for }
    (x,t)\in\mathbb{R}^{n}\times(0,T), \\
    v_{R}(x,0)=\phi_{R}(x)u(x,0)&\quad\mbox{in } \mathbb{R}^{n}.
  \end{array}\right.
$$
Let $|x|>R$. As $u(x,t)\in \mathcal{C}(\mathbb{R}^{n}\times[0,T))$ we can define the real number \begin{equation*}
M_{R}:=\sup_{|y|<R}{u(y,0)}<\infty.\end{equation*}
By \eqref{ALM} we have that
\begin{eqnarray*}
0\leq v_{R}(x,t)&\leq& M_{R}\int_{B_R}{P_{t}(x-y)\, dy}\\
&\leq& CM_{R}\int_{B_{R}}{\frac{T}{|x-y|^{n+\alpha}}\, dy}\\
&\leq& C(T,M_{R})\int_{B_{R}}{\frac{\, dy}{||x|-R|^{n+\alpha}}}\\
&=& C(T,M_{R},n)\frac{R^{n}}{||x|-R|^{n+\alpha}},\end{eqnarray*}
for any~$(x,t)\in(\mathbb{R}^{n}\setminus B_{R})\times(0,T)$.

Then, for every $\varepsilon>0$ it follows that
\begin{equation}\label{gardel}
0\leq v_{R}(x,t)\leq \varepsilon,\, \quad
{\mbox{ for any }}
|x|\geq\rho,\, t\in(0,T),
\end{equation}
where
$$\rho=R+\left(\frac{C(T,M_{R},n)R^n}{\varepsilon}\right)^{\frac{1}{n+\alpha}}>0.$$
\\
Moreover, as $u(x,t)\geq 0$ in $\mathbb{R}^{n}\times[0,T)$ we obtain that
\begin{equation}\label{condicion1}
0\leq v_{R}(x,t)\leq\varepsilon\leq\varepsilon+u(x,t),\,
{\mbox{ for any }}|x|\geq\rho, t\in(0,T)
\end{equation}
and
\begin{equation}\label{condicion2}
v_{R}(x,0)=\phi_{R}(x)u(x,0)\leq u(x,0)\leq \varepsilon+u(x,0),\,
{\mbox{ for any }}|x|\leq\rho.
\end{equation}

\

\noindent Consider the cylinder
$$D_{\rho,T}=B_{\rho}\times(0,T).$$
We define the function
$$w(x,t):=v_{R}(x,t)-u(x,t)-\varepsilon.$$
Then, by \eqref{condicion1} and \eqref{condicion2}, we get that $w(x,t)\leq 0$ in $\mathbb{R}^{n}\times[0,T)\setminus D_{\rho,T}$. Therefore, since $u(x,t)$ is a strong solution of the fractional heat equation in $\mathbb{R}^{n}\times[0,T)$, applying  Lemma \ref{weak} in $D_{\rho,T}$ to the function $w(x,t)$ we have that
$$v_{R}(x,t)\leq \varepsilon+u(x,t),\quad\mbox{for $|x|\leq\rho$ and $t\in[0,T)$}.$$
Therefore, from \eqref{condicion1}, it follows that
$$v_{R}(x,t)\leq \varepsilon+u(x,t),\quad\mbox{for $x\in\mathbb{R}^{n}$ and $t\in[0,T)$.}$$
Since $\varepsilon$ is fixed but arbitrary, the previous inequality implies that
$$v_{R}(x,t)\leq u(x,t),\, \mbox{for every $x\in\mathbb{R}^{n}$ and $t\in[0,T).$}$$
\\Finally, by the Monotone Convergence Theorem, as $\displaystyle{\lim_{R\to\infty}{\phi_{R}}=1}$, we conclude that
\begin{equation*}
0\leq v(x,t)=\lim_{R\to\infty}{v_R(x,t)}=\int_{\mathbb{R}^{n}}{P_{t}(x-y)u(y,0)dy}\leq u(x,t).\qedhere\end{equation*}
\end{proof}

\

\

By a simple time translation, we obtain from Lemma \ref{menor} the following:
\begin{corollary}\label{corol}
Let $0<\tau<T$ and $(x,t)\in\mathbb{R}^{n }\times(0,T-\tau)$. If $u(x,t)\geq0$ is a strong solution of the fractional heat equation then
\begin{equation}\label{corol2}
\int_{\mathbb{R}^{n}}{P_{t}(x-y)u(y,\tau)\, dy}\leq u(x,t+\tau),
\end{equation}
where $P_{t}(x)$ is the function defined in \eqref{nucleot}. As a consequence, for every $x\in\mathbb{R}^{n}$ and $t\in [0, T-\tau)$ we have
\begin{equation}\label{imp1}
\int_{0}^{T-t}{\int_{\mathbb{R}^{n}}{P_{t}(x-y)u(y,\tau)\, dy\, d\tau}}\leq \int_{0}^{T-t}{u(x,t+\tau)d\tau}.
\end{equation}
\end{corollary}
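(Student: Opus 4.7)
The proof is a direct consequence of Lemma \ref{menor} via time translation, plus Tonelli's theorem.

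My plan is as follows. Fix $\tau \in (0,T)$ and define $\tilde{u}(x,s) := u(x, s+\tau)$ for $(x,s) \in \mathbb{R}^n \times [0, T-\tau)$. Since the fractional heat equation $u_t + (-\Delta)^{\alpha/2} u = 0$ is autonomous (time-translation invariant) and the conditions i), ii), iii) of Definition \ref{strong} are preserved under time shifts, $\tilde{u}$ is a non-negative strong solution of the fractional heat equation on $\mathbb{R}^n \times (0, T-\tau)$, with initial trace $\tilde{u}(x,0) = u(x,\tau)$.

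Applying Lemma \ref{menor} to $\tilde{u}$ on the time interval $(0, T-\tau)$, we obtain, for every $(x,t) \in \mathbb{R}^n \times (0, T-\tau)$,
$$\int_{\mathbb{R}^{n}} P_{t}(x-y)\, u(y,\tau)\, dy = \int_{\mathbb{R}^{n}} P_{t}(x-y)\, \tilde{u}(y,0)\, dy \leq \tilde{u}(x,t) = u(x, t+\tau),$$
which is exactly \eqref{corol2}.

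For \eqref{imp1}, fix $x \in \mathbb{R}^n$ and $t \in [0, T)$, and integrate \eqref{corol2} in $\tau \in (0, T-t)$. Both sides are non-negative and measurable, so Tonelli's theorem justifies the interchange of integrals on the left-hand side and yields
$$\int_{0}^{T-t} \int_{\mathbb{R}^{n}} P_{t}(x-y)\, u(y,\tau)\, dy\, d\tau \leq \int_{0}^{T-t} u(x, t+\tau)\, d\tau,$$
as claimed. There is no real obstacle here: the only substantive point is observing that strong solutions are stable under time translation, which is immediate from Definition \ref{strong} and the autonomy of the equation; everything else is a restatement of Lemma \ref{menor} followed by an elementary integration.
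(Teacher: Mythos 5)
Your proof is correct and follows the paper's approach exactly: the paper likewise derives Corollary~\ref{corol} from Lemma~\ref{menor} ``by a simple time translation,'' and \eqref{imp1} then follows by integrating the non-negative inequality \eqref{corol2} in $\tau$. One small quibble: no interchange of integrals is actually required in the second step, so invoking Tonelli there is superfluous (measurability and the well-definedness of the left-hand integral in $[0,\infty]$ are all that is needed), but this does not affect the validity of the argument.
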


\

\

Moreover we have the following
\begin{corollary}\label{corol3}
Let $(x,t)\in\mathbb{R}^{n }\times(0,T)$. If $u(x,t)\geq0$ is a strong solution of the fractional heat equation then, $u(\cdot, t)\in\mathcal{L}^{\alpha/2}(\mathbb{R}^{n})$.
\end{corollary}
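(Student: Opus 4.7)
The plan is to derive the membership $u(\cdot,t) \in \mathcal{L}^{\alpha/2}(\mathbb{R}^n)$ directly from the convolution inequality already established in Corollary~\ref{corol}, combined with the known two-sided bounds on the kernel $P$.

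Fix an arbitrary $t_0 \in (0,T)$. Since $t_0 < T$, I can pick some auxiliary time $s \in (0, T-t_0)$ (for instance $s = (T-t_0)/2$) and apply Corollary~\ref{corol} with the roles $\tau := t_0$, $t := s$, and test point $x := 0$. This gives
$$\int_{\mathbb{R}^{n}} P_{s}(-y)\, u(y,t_0)\, dy \leq u(0, s + t_0),$$
and the right-hand side is finite because $u$ is continuous on $\mathbb{R}^n \times [0,T)$ by Definition~\ref{strong}. Note that $P_s(-y) = P_s(y)$ since $P$ is radial (its Fourier transform $e^{-|\xi|^{\alpha}}$ is radial).

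The second ingredient is a pointwise lower bound on $P_s$. The estimate \eqref{P} gives $P(z) \geq C^{-1}/(1+|z|^{n+\alpha})$, and after rescaling via \eqref{nucleot} this yields
$$P_s(y) \;=\; \frac{1}{s^{n/\alpha}} P\!\left(\frac{y}{s^{1/\alpha}}\right) \;\geq\; \frac{c\, s}{s^{(n+\alpha)/\alpha}+|y|^{n+\alpha}} \;\geq\; \frac{c_s}{1+|y|^{n+\alpha}}$$
for every $y \in \mathbb{R}^n$, with $c_s > 0$ depending only on $s$, $n$, $\alpha$. Inserting this lower bound into the previous inequality and using $u \geq 0$, I conclude
$$c_s \int_{\mathbb{R}^{n}} \frac{u(y,t_0)}{1+|y|^{n+\alpha}}\, dy \;\leq\; u(0, s+t_0) \;<\; \infty,$$
which is exactly the statement that $u(\cdot, t_0) \in \mathcal{L}^{\alpha/2}(\mathbb{R}^n)$.

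There is no real obstacle here; the whole corollary is a one-step consequence of Corollary~\ref{corol} once one notices that the kernel $P_s$ is bounded below by a positive multiple of $(1+|y|^{n+\alpha})^{-1}$, which is precisely the weight appearing in the definition of $\mathcal{L}^{\alpha/2}$. The only minor point to check is that the choice $s \in (0, T-t_0)$ is legitimate, which is guaranteed by the strict inequality $t_0 < T$.
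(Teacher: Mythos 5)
Your proof is correct and follows essentially the same route as the paper: both apply the shifted inequality of Corollary~\ref{corol} at the point $x=0$ and then use the lower bound in \eqref{P}, rescaled via \eqref{nucleot}, to dominate the $\mathcal{L}^{\alpha/2}$-weight $(1+|y|^{n+\alpha})^{-1}$ by a multiple of the kernel $P_s(y)$. The only difference is cosmetic bookkeeping (you fix $t_0$ and pick $s=(T-t_0)/2$, while the paper works with $\tau$ and $t=T-T'$), and your version is in fact a bit cleaner about which time slice is being shown to lie in $\mathcal{L}^{\alpha/2}$.
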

\begin{proof}
Let $0<T'<T$. Taking $t=T-T'$ in \eqref{corol2}, from \eqref{P}, we get that
\begin{eqnarray}\label{imp2}
\frac{T-T'}{C}\int_{\mathbb{R}^{n}}{\frac{u(y,\tau)}{(T-T')^{\frac{n+\alpha}{\alpha}}+|x-y|^{n+\alpha}}\, dy}&\leq& \int_{\mathbb{R}^{n}}{P_{T-T'}(x-y)u(y,\tau)\, dy}\nonumber\\
&\leq& u(x,T-T'+\tau)<\infty.
\end{eqnarray}
Let
$$C(T):=\min\left\{1, \frac{1}{(T-T')^{\frac{n+\alpha}{\alpha}}}\right\}.$$
Then, since
$$\frac{1}{(T-T')^{\frac{n+\alpha}{\alpha}}+|y|^{n+\alpha}}\geq C(T)\frac{1}{1+|y|^{n+\alpha}},$$
taking $x=0$ in \eqref{imp2}, we conclude that
\begin{equation*}
\int_{\mathbb{R}^{n}}{\frac{|u(y,t)|}{(1+|y|^{n+\alpha})}\, dy}<\infty.
\qedhere
\end{equation*}
\end{proof}

\

\

We also have
\begin{corollary}\label{corol4}
Let $(x,t)\in\mathbb{R}^{n }\times(0,T)$. If $u(x,t)\geq0$ is a strong solution of the fractional heat equation, then $u(\cdot, t)\in L^{1}([0,T'],\mathcal{L}^{\alpha/2}(\mathbb{R}^{n}))$ for every $0<T'<T.$
\end{corollary}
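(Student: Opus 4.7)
The plan is to derive the desired integrability by combining the pointwise comparison of Corollary~\ref{corol} (applied at the single point $x=0$) with the lower bound on the kernel~$P_t$ furnished by \eqref{P}, exactly in the spirit of Corollary~\ref{corol3} but now integrated in time against the continuous function $\tau\mapsto u(0,\tau)$.

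First, fix $T'\in(0,T)$ and pick any $t_0\in(0,T-T')$, so that the translated interval $[t_0,t_0+T']$ lies compactly inside $(0,T)$. For every $\tau\in(0,T')$ we then have $t_0+\tau\in(0,T)$, and Corollary~\ref{corol} (with the roles of $t$ and $\tau$ chosen as $t=t_0$ and $\tau$ free) gives the pointwise bound
\begin{equation*}
\int_{\mathbb{R}^{n}}P_{t_0}(-y)\,u(y,\tau)\,dy\ \le\ u(0,t_0+\tau).
\end{equation*}

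Second, I would invoke the two-sided estimate \eqref{P} for the profile $P$, together with the scaling \eqref{nucleot}, to extract a constant $C(t_0)>0$ such that
\begin{equation*}
P_{t_0}(-y)\ \ge\ \frac{C(t_0)}{1+|y|^{n+\alpha}}\qquad\text{for every } y\in\mathbb{R}^{n},
\end{equation*}
exactly as was done in the last display of the proof of Corollary~\ref{corol3}. Inserting this into the previous inequality yields
\begin{equation*}
C(t_0)\int_{\mathbb{R}^{n}}\frac{u(y,\tau)}{1+|y|^{n+\alpha}}\,dy\ \le\ u(0,t_0+\tau),\qquad \tau\in(0,T').
\end{equation*}

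Third, I would integrate this in $\tau$ over $(0,T')$ and apply Tonelli (both sides are non-negative and measurable) to obtain
\begin{equation*}
C(t_0)\int_{0}^{T'}\!\int_{\mathbb{R}^{n}}\frac{u(y,\tau)}{1+|y|^{n+\alpha}}\,dy\,d\tau\ \le\ \int_{0}^{T'}u(0,t_0+\tau)\,d\tau.
\end{equation*}
The right-hand side is finite because, by the very definition of strong solution, $u\in\mathcal{C}(\mathbb{R}^{n}\times[0,T))$, hence $\tau\mapsto u(0,t_0+\tau)$ is continuous on the compact interval $[0,T']$ and therefore bounded. Dividing through by $C(t_0)>0$ shows that $\|u(\cdot,\tau)\|_{\mathcal{L}^{\alpha/2}(\mathbb{R}^{n})}$ is in $L^1(0,T')$, which is exactly the claim.

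There is no real obstacle here: all the hard analytic work sits in Lemma~\ref{menor} (supplying \eqref{corol2}) and in the kernel estimate \eqref{P}. The only point requiring a little care is the choice of $t_0\in(0,T-T')$, which is what makes the temporal shift admissible and keeps the right-hand side bounded; this is why the statement asks for $T'<T$ rather than $T'\le T$.
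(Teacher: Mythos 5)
Your proof is correct and follows essentially the same route as the paper: apply the time-shifted pointwise inequality from Corollary~\ref{corol}, integrate in $\tau$, and then use the two-sided kernel bound \eqref{P} as in Corollary~\ref{corol3}. In fact your choice of a strict intermediate time $t_0\in(0,T-T')$ is slightly more careful than the paper's substitution $t=T-T'$, since it keeps the shifted interval $[t_0,t_0+T']$ compactly inside $(0,T)$ where continuity of $u$ guarantees a finite right-hand side, whereas the endpoint $t=T-T'$ would push the time integral all the way to $t=T$.
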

\begin{proof}
Take an arbitrary $0<T'<T$. By \eqref{imp1} with $t=T-T'$ we have that
$$\int_{0}^{T'}{\int_{\mathbb{R}^{n}}{P_{T-T'}(x-y)u(y,\tau)\, dy\, d\tau}}\leq \int_{0}^{T'}{u(x,\tau)d\tau}.$$
Then, as $u\in \mathcal{C}(\mathbb{R}^{n}\times[0,T'])$ we get
$$\int_{0}^{T'}{\int_{\mathbb{R}^{n}}{P_{T-T'}(x-y)u(y,\tau)\, dy\, d\tau}}<\infty.$$
Therefore doing as the proof of Corollary \ref{corol3} we get that
$$\int_{0}^{T'}{\int_{\mathbb{R}^{n}}{\frac{|u(y,t)|}{(1+|y|^{n+\alpha})}\, dy\, dt}}<\infty.$$
That is, $u\in L^{1}([0,T'],\mathcal{L}^{\alpha/2}(\mathbb{R}^{n}))$ for every $0<T'<T.$
\end{proof}
Note that Corollary~\ref{corol4}
affirms that if $u(x,t)\geq0$ is a strong solution of the fractional heat equation then $u$  is also a weak solution of the same equation.

\

Now we are able to prove our main result:

\begin{proof}[Proof of Theorem \ref{MAIN}]
By Corollary \ref{corol4} we get that $u\in L^{1}([0,T'],\mathcal{L}^{\alpha/2}(\mathbb{R}^{n}))$ for every $0<T'<T.$
Moreover if we define
$$p(x,t):=\int_{\mathbb{R}^{n}}{P_{t}(x-y)u(y,0)\, dy},$$
 by \eqref{imp1} with $t=T-T'$, we also have that $p\in L^{1}([0,T'],\mathcal{L}^{\alpha/2}(\mathbb{R}^{n}))$. Let now the function
$$w(x,t):=u(x,t)-p(x,t)\geq0.$$
It is clear that $w$ is a strong solution of the fractional heat equation. Moreover, as $w\in L^{1}([0,T'],\mathcal{L}^{\alpha/2}(\mathbb{R}^{n}))$, then $w(x,t)$ is also a solution in the weak sense with zero initial datum. Therefore applying Theorem \ref{Th1} we conclude that $w(x,t)=0$ for every $(x,t)\in\mathbb{R}^{n}\times[0,T)$.
\end{proof}
\subsection{About viscosity solutions.}
As was said at the beginning of this work, it is natural to consider viscosity solutions of the fractional heat equation.
Our purpose here is to describe  some cases in which a positive viscosity solution has the unique representation in terms of the kernel $P_t$.
\begin{proposition}\label{viscosity1}
Let $\{u_n\}$ be a sequence of non-negative, strong solutions of the fractional heat equation converging, uniformly over compact sets, to a given function $u$.
Then $u\geq0$ is a viscosity solution of \eqref{eq:base} satisfying
\begin{equation}\label{v1}
\int_{\mathbb{R}^{n}}{P_{t}(x-y)u(y,0)\, dy}\leq u(x,t),\quad (x,t)\in\mathbb{R}^{n}\times(0,T)
\end{equation}
and
\begin{equation}\label{v2}
u\in L^{1}([0,T'],\mathcal{L}^{\alpha/2}(\mathbb{R}^{n}))\quad\mbox{ for every $0<T'<T.$}
\end{equation}
That is, the conclusions of Lemma \ref{menor}  and Corollary \ref{corol4} are satisfied.
\end{proposition}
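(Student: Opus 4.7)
The plan is to verify each of the four conclusions in turn, exploiting the stability of the relevant properties of $\{u_n\}$ under uniform convergence on compacts. Non-negativity of $u$ is immediate from pointwise convergence of non-negative functions, and $u \in \mathcal{C}(\mathbb{R}^n \times [0, T))$ is automatic since a uniform-on-compacts limit of continuous functions is continuous.

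For the viscosity claim, I would first observe that each strong solution $u_n$ is in particular a viscosity solution in the sense of Definition \ref{def.visc.parabolic}, a fact which follows from the pointwise validity of the equation together with a standard tail-splitting argument (see \cite{Imbert}). Stability then takes over: given $\varphi \in \mathcal{C}^{1,2}_p(Q)$ such that $u - \varphi$ has a strict local maximum at $(\hat{x}, \hat{t})$, uniform convergence of $u_n$ to $u$ on a neighborhood of $(\hat{x}, \hat{t})$ yields local maxima $(\hat{x}_n, \hat{t}_n) \to (\hat{x}, \hat{t})$ of $u_n - \varphi$; applying the viscosity inequality for $u_n$ at $(\hat{x}_n, \hat{t}_n)$ and passing to the limit, using continuity of $\varphi_t$ and $(-\Delta)^{\alpha/2}\varphi$, gives the desired inequality for $u$. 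The usual strict-max perturbation (adding a quadratic penalty in both $x$ and $t$) reduces the general case to the strict one.

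For \eqref{v1}, Lemma \ref{menor} applied to each $u_n$ gives
$$\int_{\mathbb{R}^n} P_t(x - y)\, u_n(y, 0)\, dy \leq u_n(x, t).$$
The right-hand side converges to $u(x, t)$ by uniform convergence on compacts, while the integrand on the left is a product of non-negative functions with $u_n(\cdot, 0) \to u(\cdot, 0)$ pointwise, so Fatou's lemma yields \eqref{v1}. The same idea handles \eqref{v2}: apply the time-integrated inequality \eqref{imp1} from Corollary \ref{corol} to $u_n$, namely
$$\int_0^{T - t} \int_{\mathbb{R}^n} P_t(x - y)\, u_n(y, \tau)\, dy\, d\tau \leq \int_0^{T - t} u_n(x, t + \tau)\, d\tau.$$
Uniform convergence on the compact time interval $[t, T]$ controls the right-hand side in the limit, Fatou in $(y, \tau)$ controls the left-hand side, and the two-sided bound \eqref{P} for $P$ then permits the conclusion, exactly as in the proofs of Corollary \ref{corol3} and Corollary \ref{corol4}, that $u \in L^1([0, T'], \mathcal{L}^{\alpha/2}(\mathbb{R}^n))$ for every $0 < T' < T$.

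The main obstacle in the plan is the viscosity step, where one must handle the nonlocal operator with care: a local maximum of $u_n - \varphi$ does not by itself yield a global comparison, and so both the implication strong $\Rightarrow$ viscosity and the stability under uniform convergence rely on the nonlocal viscosity framework of \cite{Imbert} to control tail contributions. Once this is in place, the remaining steps are routine applications of Fatou's lemma combined with the already-established results for strong solutions.
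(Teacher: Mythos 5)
Your proposal follows essentially the same route as the paper: Fatou applied to Lemma \ref{menor} for the $u_n$ gives \eqref{v1}; the machinery of Corollaries \ref{corol}--\ref{corol4} gives \eqref{v2}; and strong-implies-viscosity together with stability of viscosity solutions under locally uniform limits gives the viscosity claim (the paper simply cites a comparison principle from Silvestre's thesis for this last point, whereas you spell out the stability argument). One small slip worth fixing: the interval $[t,T]$ is not compact inside the domain $\mathbb{R}^n\times[0,T)$, so the uniform-convergence step controlling the right-hand side of \eqref{imp1} should run over $[t,T'']$ for some $T''<T$, exactly as in the proof of Corollary \ref{corol4}.
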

\begin{proof}
By Lemma \ref{menor} it follows that
$$\int_{\mathbb{R}^{n}}{P_{t}(x-y)u_n(y,0)\, dy}\leq u_n(x,t),\quad (x,t)\in\mathbb{R}^{n}\times(0,T).$$
Applying the Fatou Lemma we obtain \eqref{v1}. Therefore, doing as in the proof of the Corollary \ref{corol4} we conclude \eqref{v2}.
Note also that, by the comparison principle (Corolary 2.1.6 of \cite{S}), $u_n\geq 0$ is a viscosity solution of \eqref{eq:base} for every $n\in\mathbb{N}$. Therefore, since $u$ is the uniform limit over compact sets of viscosity solutions, we get that $u$ is also a viscosity solution of \eqref{eq:base}.
\end{proof}

Notice that to conclude the equality in \eqref{v1} we would need to know that $u$ is a weak solution.

If we add a monotonicity condition over the sequence $\{u_n\}$ then we obtain the following
\begin{proposition}\label{viscosity2}

Let $\{u_n\}$ be a monotone sequence of non-negative,  strong solutions of the fractional heat equation converging, uniformly over compact sets, to a given function $u$. Then $u\geq0$ is a strong solution of \eqref{eq:base} satisfying
\begin{equation}\label{v21}
\int_{\mathbb{R}^{n}}{P_{t}(x-y)u(y,0)\, dy}=u(x,t),\quad (x,t)\in\mathbb{R}^{n}\times(0,T).
\end{equation}
\end{proposition}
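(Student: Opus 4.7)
The plan is to apply Theorem~\ref{MAIN} to each $u_n$ and then pass to the limit, using the monotonicity hypothesis to upgrade the one-sided bound~\eqref{v1} of Proposition~\ref{viscosity1} to the claimed equality.

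First I would invoke Proposition~\ref{viscosity1}, which already delivers $u\geq 0$, the inequality $\int P_t(x-y)u(y,0)\,dy\leq u(x,t)$, and $u\in L^1([0,T'],\mathcal{L}^{\alpha/2}(\mathbb{R}^n))$. Next, since each $u_n$ is a non-negative strong solution, Theorem~\ref{MAIN} yields
$$u_n(x,t)=\int_{\mathbb{R}^n} P_t(x-y)\,u_n(y,0)\,dy.$$
Uniform convergence on compact sets gives $u_n(\cdot,0)\to u(\cdot,0)$ pointwise. If $\{u_n\}$ is non-decreasing, I pass to the limit by the Monotone Convergence Theorem; if it is non-increasing, I use the domination $0\leq u_n(y,0)\leq u_1(y,0)$ together with the finiteness of $\int P_t(x-y)u_1(y,0)\,dy=u_1(x,t)$ and apply the Dominated Convergence Theorem. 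Either way,
$$u(x,t)=\lim_{n\to\infty} u_n(x,t)=\int_{\mathbb{R}^n} P_t(x-y)\,u(y,0)\,dy,$$
which is precisely~\eqref{v21}.

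It remains to check that $u$ is a strong (not merely viscosity) solution. Continuity of $u$ on $\mathbb{R}^n\times[0,T)$ is immediate from the uniform convergence of the continuous functions $u_n$ on compact sets. For $t>0$, representation~\eqref{v21} combined with the $C^\infty$ smoothness of $(x,t)\mapsto P_t(x)$ on $\mathbb{R}^n\times(0,\infty)$ (noted after Theorem~\ref{MAIN}) and differentiation under the integral sign gives $u\in C^\infty(\mathbb{R}^n\times(0,T))$; together with the fact that $P_t$ itself is a fundamental solution of the fractional heat equation, this produces the pointwise identity $u_t+(-\Delta)^{\alpha/2}u=0$. The main technical point, and the one that absorbs most of the effort, will be to justify bringing $(-\Delta)^{\alpha/2}$ inside the convolution: I would split the principal value into a near part (where smoothness of $P_t$ and the second-order cancellation handle the singularity) and a far part (controlled by the tail estimate~\eqref{ALM} and by $u(\cdot,0)\in\mathcal{L}^{\alpha/2}(\mathbb{R}^n)$, which in turn follows from~\eqref{v21} and the lower bound in~\eqref{P}, exactly as in the proof of Corollary~\ref{corol3}). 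The monotonicity plays no role in this final verification; it is used solely to turn the Fatou-type inequality of Proposition~\ref{viscosity1} into the sharp identity~\eqref{v21}.
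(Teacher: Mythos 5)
Your proposal matches the paper's proof: apply Theorem~\ref{MAIN} to each $u_n$, pass to the limit by monotone convergence to get \eqref{v21}, and then deduce the strong-solution property from the explicit representation. You are somewhat more careful than the paper in two places --- separating the non-increasing case (where you supply the needed domination by $u_1(y,0)$, whereas the paper simply cites the Monotone Convergence Theorem) and in spelling out why the convolution formula yields a strong solution, a step the paper dismisses as ``clear''.
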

\begin{proof}
Since, for every $n\in\mathbb{N}$, $u_n\geq0$ satisfies Theorem \ref{MAIN}, by the Monotone Convergence Theorem we obtain \eqref{v21}.
Clearly this implies that  $u\geq0$ is a strong solution of \eqref{eq:base}.
\end{proof}
\begin{remark}
It would be interesting  to find the biggest class of positive viscosity solution for which the representation property
\eqref{v21} holds. This seems to be an open problem as far as we know.
\end{remark}
\section{Further results.}
As in the local case (see \cite{W}), given a solution $u$, one can define
the {\it enthalpy}
$$\displaystyle v(x,t)=\int_{0}^{t}{u(x,s)\, ds}$$
which is also a solution of the fractional heat equation. Indeed we have the following result:
\begin{lemma}\label{entalpy}
Let $(x,t)\in\mathbb{R}^{n }\times[0,T)$. If $u(x,t)$ is a strong solution of the fractional heat equation with vanishing initial condition, then the entalpy term
$$v(x,t)=\int_{0}^{t}{u(x,s)\, ds}$$
is also a strong solution of the fractional heat equation. Moreover if $u$ is positive the function $v$ is increasing in $t$ for $x$ fixed and $\alpha/2$-subharmonic as a function of $x$ in the weak sense.
\end{lemma}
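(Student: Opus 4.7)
The plan is to verify the three conditions of Definition \ref{strong} for $v$ and then derive the two monotonicity-type properties. Continuity of $v$ on $\mathbb{R}^n\times[0,T)$ is immediate from the continuity of $u$ together with dominated convergence on a compact time interval. The fundamental theorem of calculus gives $v_t(x,t)=u(x,t)$, which is continuous on $\mathbb{R}^n\times(0,T)$ by hypothesis. The heart of the matter is the pointwise identity
\[
(-\Delta)^{\alpha/2}v(x,t)=\int_0^t (-\Delta)^{\alpha/2}u(x,s)\,ds,
\]
which, combined with the equation for $u$ and the vanishing initial datum $u(x,0)=0$, would give $(-\Delta)^{\alpha/2}v(x,t)=-\int_0^t u_s(x,s)\,ds=-u(x,t)$, and hence $v_t+(-\Delta)^{\alpha/2}v=0$.

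The main obstacle is justifying the exchange between the principal value in $y$ and the integral in $s$. I would split the $y$-integration into the two regions $|x-y|<\rho$ and $|x-y|\ge\rho$ for a fixed $\rho>0$. On the far region the integrand $|u(x,s)-u(y,s)|/|x-y|^{n+\alpha}$ is absolutely integrable in $(y,s)\in\mathbb{R}^n\times[0,t]$, thanks to the continuity of $u$ in $x$ and the fact that, for strong solutions, the map $s\mapsto(-\Delta)^{\alpha/2}u(\cdot,s)$ is continuous, which forces the far-field integrability of $u(\cdot,s)$ to be locally uniform in $s$; so Fubini applies on this region. On the near region one uses that, by Definition \ref{strong}, the principal value
\[
\lim_{\epsilon\to0^+}\int_{\epsilon<|x-y|<\rho}\frac{u(x,s)-u(y,s)}{|x-y|^{n+\alpha}}\,dy
\]
exists and depends continuously on $s\in[0,t]$, being the difference between the full P.V.\ integral and an absolutely convergent far-field piece, both continuous in $s$. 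This continuity permits passing the $\epsilon$-limit through the $s$-integral, which completes the interchange and proves the displayed identity.

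With the pointwise equation in hand, the remaining claims are immediate. If $u\ge0$, then $v_t=u\ge0$, so $v(x,\cdot)$ is non-decreasing for every fixed $x$. Moreover the pointwise identity $(-\Delta)^{\alpha/2}v(\cdot,t)=-u(\cdot,t)\le0$ is precisely the statement that $v(\cdot,t)$ is $\alpha/2$-subharmonic in the pointwise sense, and hence in the weak sense via duality against non-negative Schwartz test functions. The entire technical difficulty of the argument is thus concentrated in the Fubini-type exchange; everything else is either built into the definition of a strong solution or follows at once from the resulting pointwise equation.
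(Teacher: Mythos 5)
Your proof follows the same route as the paper: $v_t=u$ by the Fundamental Theorem of Calculus, then an interchange of the principal value with $\int_0^t\,ds$ gives
$$(-\Delta)^{\alpha/2}v(x,t)=\int_0^t(-\Delta)^{\alpha/2}u(x,s)\,ds=-\int_0^t u_s(x,s)\,ds=-u(x,t)=-v_t(x,t),$$
and from the resulting pointwise equation the monotonicity in $t$ and the $\alpha/2$-subharmonicity for $u\ge0$ are immediate. The paper condenses the interchange step to the words ``Fundamental Theorem of Calculus and Fubini Theorem''; you try to supply detail via a near/far split, but the added detail does not quite close the argument. Pointwise continuity in $s$ of the limit $\lim_{\epsilon\to0^+}\int_{\epsilon<|x-y|<\rho}\frac{u(x,s)-u(y,s)}{|x-y|^{n+\alpha}}\,dy$ is not by itself enough to push that $\epsilon$-limit through $\int_0^t\,ds$; one would need uniform convergence in $s$ on $[0,t]$ or a dominating function. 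Likewise, the paper's P.V. is a two-sided limit (truncating both near $0$ and near infinity), so the ``far-field piece'' is itself a conditional limit and is not automatically absolutely convergent for a general strong solution; note the lemma does not assume $u\ge0$, so Corollary~\ref{corol3}'s conclusion $u(\cdot,t)\in\mathcal{L}^{\alpha/2}(\mathbb{R}^n)$ is not available here. These are the same gaps the paper leaves by invoking Fubini without justification, so your write-up is in the same spirit as the published proof; but if the goal of the added detail was to make the interchange rigorous, you still need to produce the missing domination or uniformity.
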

\begin{proof}
Let $(x,t)\in\mathbb{R}^{n}\times[0,T)$. First of all note that $v(x,t)$ satisfies the conditions i)-iii) of the Definition \ref{strong}. Therefore, since $u(x,t)$ is a strong solution of the fractional heat equation, by the Fundamental Theorem of Calculus and Fubini Theorem it follows that
\begin{eqnarray*}
&& C(n,\alpha){\mbox{P.V.}}\int_{\mathbb{R}^{n}}{\frac{v(x,t)-v(y,t)}{|x-y|^{n+\alpha}}\, dy}
\\&=&C(n,\alpha){\mbox{P.V.}}
\int_{\mathbb{R}^{n}}{\frac{\int_{0}^{t}{u(x,s)\, ds}-\int_{0}^{t}{u(y,s)\, ds}}{|x-y|^{n+\alpha}}\, dy}\\
&=&C(n,\alpha){\mbox{P.V.}}
\int_{\mathbb{R}^{n}}{\int_{0}^{t}{\frac{u(x,s)-u(y,s)}{|x-y|^{n+\alpha}}\, ds\, dy}}\\
&=&\int_{0}^{t}{(-\Delta)^{\alpha/2}u(x,s)\, ds}\\
&=&-\int_{0}^{t}u_{s}(s,t)\, ds\\
&=&-u(x,t)\\
&=&-v_{t}(x,t),\,
\end{eqnarray*}
for any $(x,t)\in\mathbb{R}^{n}\times(0,T]$.
Then, $v(x,t)$ satisfies the fractional heat equation in the strong sense.
Also if $u\geq 0$ from the calculations done before we deduce that
$$-(-\Delta)^{\alpha/2}v(x,t)=v_{t}(x,t)=u(x,t)\geq 0.$$
So $v$ is  $\alpha/2$-subharmonic as a function of $x$ and increasing in $t$ for $x$ fixed.
\end{proof}

Lemma \ref{entalpy} shows that the {enthalpy} belongs to the special
class of positive strong solutions that are also
$\alpha/2$-subharmonic. This class naturally satisfies
a polynomial estimate, as shown by the following result.
\begin{proposition}\label{point}
Let $(x,t)\in\mathbb{R}^{n }\times[0,T)$. Let $u(x,t)\geq0$ such that
\begin{itemize}
\item[i)]{$u(x,t)$ is an $\alpha/2$-subharmonic function with respect to the variable $x$.}
\item[ii)]{$u(x,t)$ is a strong solution of the fractional heat equation.}
\end{itemize}
Then $u(x,t)\leq C(t)(1+|x|^{n+\alpha})$ if $(x,t)\in\mathbb{R}^{n}\times[0,T)$.
\end{proposition}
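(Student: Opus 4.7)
The plan is to combine the Widder representation from Theorem \ref{MAIN} with the integrability established in Corollary \ref{corol3}, and to use the subharmonicity hypothesis only at the end to transfer the estimate down to $t=0$.

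First I would observe that, by hypothesis (ii) and positivity, Theorem \ref{MAIN} applies to $u$. Moreover, for any $s\in(0,T)$ the translate $v(x,\tau):=u(x,s+\tau)$ is again a non-negative strong solution on $\mathbb{R}^{n}\times[0,T-s)$, so a further application of Theorem \ref{MAIN} yields the semigroup-type identity
$$u(x,t)=\int_{\mathbb{R}^{n}}P_{t-s}(x-y)\,u(y,s)\,dy,\qquad 0<s<t<T.$$

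Next, combining the pointwise bound $P_{t-s}(z)\le C(t-s)\bigl((t-s)^{(n+\alpha)/\alpha}+|z|^{n+\alpha}\bigr)^{-1}$ from \eqref{ALM} with Corollary \ref{corol3} (which gives $u(\cdot,s)\in\mathcal{L}^{\alpha/2}(\mathbb{R}^{n})$), the main calculation would be the elementary pointwise kernel estimate
$$\frac{(t-s)\bigl(1+|y|^{n+\alpha}\bigr)}{(t-s)^{(n+\alpha)/\alpha}+|x-y|^{n+\alpha}}\le K(t-s)\bigl(1+|x|^{n+\alpha}\bigr),$$
valid for all $x,y\in\mathbb{R}^{n}$ with a constant $K(t-s)$ that blows up at worst like $(t-s)^{-n/\alpha}$ as $s\uparrow t$. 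I would prove this by splitting into the two regions $|y|\le 2(1+|x|)$, where $1+|y|^{n+\alpha}$ is comparable to $1+|x|^{n+\alpha}$ and the denominator is bounded below by $(t-s)^{(n+\alpha)/\alpha}$, and $|y|>2(1+|x|)$, where $|x-y|\ge|y|/2$ so that the denominator dominates $|y|^{n+\alpha}$. Multiplying the representation by the estimate and integrating yields, for every $0<s<t<T$,
$$u(x,t)\le K(t-s)\bigl(1+|x|^{n+\alpha}\bigr)\,\|u(\cdot,s)\|_{\mathcal{L}^{\alpha/2}(\mathbb{R}^{n})}.$$
For any $t\in(0,T)$ the choice $s=t/2$ gives the desired $C(t)(1+|x|^{n+\alpha})$ bound.

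To handle $t=0$ I would invoke hypothesis (i). Since $u$ is a strong solution the function $(-\Delta)^{\alpha/2}u(x,t)=-u_{t}(x,t)$ is well-defined pointwise and continuous; hence the weak subharmonicity in $x$ upgrades to a pointwise inequality $(-\Delta)^{\alpha/2}u\le 0$, which in turn gives $u_{t}\ge 0$, i.e.\ $u$ is non-decreasing in $t$. Therefore $u(x,0)\le u(x,T/2)$ and the bound at $t=0$ follows from the previous step applied at $t=T/2$. The main obstacle is the kernel estimate in the second step: it is elementary but needs a careful two-case split, and one must verify that the resulting constant depends only on $t-s$ and degenerates no worse than $(t-s)^{-n/\alpha}$, so that $s=t/2$ produces a finite $C(t)$ for every fixed $t\in(0,T)$.
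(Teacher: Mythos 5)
Your proof is logically correct, and the kernel estimate at its core is sound (both cases of the two-region split check out, and the resulting $K(t-s)\sim(t-s)^{-n/\alpha}$ degeneration is as you describe). But the route is genuinely different from the paper's, and the difference matters in context. You invoke Theorem \ref{MAIN} to obtain the exact semigroup identity $u(x,t)=\int P_{t-s}(x-y)u(y,s)\,dy$ and then bound the right side; the paper's proof deliberately avoids Theorem \ref{MAIN} and its heavy uniqueness machinery (Theorem \ref{Th1}). Instead, it uses only the one-sided inequality $\int P_{t}(x-y)u(y,\tau)\,dy\leq u(x,t+\tau)$ from Corollary \ref{corol} (which comes from the elementary comparison Lemma \ref{menor}), and the real work is done by the mean value property for $\alpha/2$-subharmonic functions (Proposition 2.2.6 of \cite{S}): $u(x_0,t)\leq\int\gamma_{\lambda}(y-x_0)u(y,t)\,dy$, split into $|y|\geq\lambda$ and $|y|\leq\lambda$ with $\lambda=|x|/4$. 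Thus the paper uses hypothesis (i) as the central engine of the estimate, whereas you use it only as an afterthought to push the bound down to $t=0$; your argument for $t>0$ would in fact apply to \emph{any} non-negative strong solution. This is a telling sign that the two proofs have very different content. The practical cost of your route is that it defeats the stated purpose of the Remark following the proposition, which explicitly suggests Proposition \ref{point} as a potential ingredient in an \emph{alternative} proof of Theorem \ref{MAIN}; deriving the proposition \emph{from} Theorem \ref{MAIN}, while not logically circular within the paper as a whole, closes off that program. The paper's proof, by staying inside the elementary comparison theory plus the $\alpha/2$-subharmonic mean value inequality, keeps that door open.
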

\begin{proof}
By hypothesis it is clear that $u$ is an $\alpha/2$-subharmonic function with respect to the variable $x$ for $t$ fixed and therefore
$u$ is increasing in time.
\\Let now $0<t_1<T$ and $0<t_0<T-t_1$. By Corollary \ref{corol} we have that
$$\int_{\mathbb{R}^{n}}{P_{t}(x-y)u(y,t_1)dy}\leq u(x,t+t_1),\,
{\mbox{ for any }}0<t<T-t_1.$$
Therefore
$$M_{t_0}:=\int_{\mathbb{R}^{n}}{P_{t_0}(y)u(y,t_1)dy}\leq u(0,t_0+t_1)<\infty.$$
Our objective is to show that
\begin{equation}\label{objetivo}
|u(x,t_1)|\leq C(1+|x|^{n+\alpha}).
\end{equation}
Once this is done, using that $u$ is increasing in time, we would get
$$0\leq u(x,t)\leq u(x,t_1)\leq C(1+|x|^{n+\alpha})\quad\mbox{for every $(x,t)\in\mathbb{R}^{n}\times(0,t_1)$}.$$ But, since  $t_0$ and $t_1$ are fixed but arbitrary, we would conclude that
$$|u(x,t)|\leq C(1+|x|^{n+\alpha}).$$
So, we are left to showing that \eqref{objetivo} is true. Note that from Corollary \ref{corol3} we have that $u\in\mathcal{L}^{\alpha/2}(\mathbb{R}^{n})$. Then as $u$ is $\alpha/2$-subharmonic, by Proposition 2.2.6 of \cite{S}
$u$ also satisfies the following mean value property:
\begin{equation}\label{meanvalue}
u(x_0,t)\leq \int_{\mathbb{R}^{n}}{\gamma_{\lambda}(y-x_0)u(y,t)dy}\, \mbox{
for every $x_0\in\Omega\subseteq\mathbb{R}^{n}$ and $\lambda\leq\rm{dist}(x_0,\partial\Omega)$},
\end{equation}
where
$$\gamma_{\lambda}(x):=(-\Delta)^{\alpha/2}\Gamma_{\lambda}(x),$$
and
$$\Gamma_{\lambda}(x):=\frac{1}{\lambda^{n-\alpha}}\Gamma\Big(\frac{x}{\lambda}\Big).$$
Here $\Gamma$ is a $C^{1,1}$ function that coincides with $\Phi(x):=c|x|^{\alpha-n}$ outside the ball $B_{\lambda}$ and $\Gamma$ is a paraboloid inside this ball. Note that $\Phi$ is the fundamental solution of $(-\Delta)^{\alpha/2}$.
From \eqref{meanvalue} we have
\begin{eqnarray}
u(x,t_1)(1+|x|^{n+\alpha})^{-1}&\leq&(1+|x|^{n+\alpha})^{-1}\int_{\mathbb{R}^{n}}{\gamma_{\lambda}(y)u(x-y,t_1)\, dy}\nonumber\\
&=&(1+|x|^{n+\alpha})^{-1}\int_{\{y:\, |y|\geq\lambda\}}{\gamma_{\lambda}(y)u(x-y,t_1)\, dy}\nonumber\\
&+&(1+|x|^{n+\alpha})^{-1}\int_{\{y:\, |y|\leq\lambda\}}{\gamma_{\lambda}(y)u(x-y,t_1)\, dy}\nonumber\\
&:=&I_1(x)+ I_2(x).\label{ism}
\end{eqnarray}
Choosing \begin{equation}\label{4.3bis}
\lambda=|x|/4,\end{equation} we have that $|x-y|\leq5|y|$. Therefore, by Proposition 2.2.3 of \cite{S}, we obtain
\begin{eqnarray}
I_1(x)&\leq&C(1+|x|^{n+\alpha})^{-1}\int_{\{y:\, |y|\geq\lambda\}}{\frac{u(x-y,t_1)}{|y|^{n+\alpha}}\, dy}\nonumber\\
&\leq&C(1+|x|^{n+\alpha})^{-1}\int_{\mathbb{R}^{n}}{\frac{u(x-y,t_1)}{|x-y|^{n+\alpha}}\, dy}\nonumber\\
&\leq&C\|u\|_{\mathcal{L}^{\alpha/2}(\mathbb{R}^{n})}:= C_1.\label{ism1}
\end{eqnarray}
Moreover, using that the fractional Laplacian of a paraboloid is bounded, by \eqref{P} and \eqref{4.3bis}, it follows that
\begin{eqnarray}
I_2(x)&\leq& C(1+|x|^{n+\alpha})^{-1}\int_{\{y:\, |y|\leq\lambda\}}{u(x-y,t_1)\, dy}\nonumber\\
&\leq& C(1+|x|^{n+\alpha})^{-1}\int_{\{z:\, |x-z|\leq\lambda\}}{u(z,t_1)\, dz}\nonumber\\
&\leq& C(1+|x|^{n+\alpha})^{-1}\int_{\{z:\, |z|\leq2|x|\}}{u(z,t_1)P_{1}(z)\frac{1}{P_{1}(z)}\, dz}\nonumber\\
&\leq& C(1+|x|^{n+\alpha})^{-1}(1+(2|x|)^{n+\alpha})\int_{\{z:\, |z|\leq2|x|\}}{u(z,t_1)P_{1}(z)\, dz}\nonumber\\
&\leq& CM_{1}(1+|x|^{n+\alpha})^{-1}(1+(2|x|)^{n+\alpha})\nonumber\\
&\leq& C(n,\alpha)u(0,1+t_1):=C_2.\label{ism2}
\end{eqnarray}
By \eqref{ism},
\eqref{ism1} and \eqref{ism2}, we obtain \eqref{objetivo} and we conclude the proof.
\end{proof}
\begin{remark}
It would be interesting to prove that the pointwise behavior in Proposition \ref{point} and some comparison arguments provide an alternative proof to our
main result, namely, the representation of every solution as a convolution with the associated Poisson kernel.
\end{remark}




\begin{thebibliography}{10000}

\bibitem{BCP} {\sc P. B\"{e}nilan, M. G. Crandall, M. Pierre},
{\em Solutions of the porous medium equation in $\mathbb{R}^{n}$ under optimal conditions on initial values.}
Indiana Univ. Math. J. {\bf 33} (2003), no. 1, 51-87.

\bibitem{B} {\sc K. Bogdan, A. St\'os, A. Sztonyk}, {\em Harnack inequality for stable processes on $d$-set.}
Studia Math. {\bf 158} (2003), no. 2, 163-198.

\bibitem{B1}{\sc K. Bogdan, T. Jakubowski}, {\em Estimates of the heat kernel of fractional Laplacian perturbed by gradient operators.}
Comm. Math. Phys. {\bf 271} (2007), no. 1, 179-198.


\bibitem{CR2} {\sc X. Cabr\'e, J. M. Roquejoffre}, {\em The influence of fractional diffusion in Fisher-KPP equations.}
To appear in Comm. Math. Phys.

\bibitem{droniou} {\sc J. Droniou, T. Gallo\"{u}et, J. Vovelle}, {\em Global solution and smoothing effect for a non-local regularization of a hyperbolic equation.}
J. Evol. Equ. {\bf 3} (2003), no. 3, 499-521.

\bibitem{Imbert} {\sc C. Imbert}, {\em A non-local regularization of first order Hamilton-Jacobi equations.} J. Differential Equations {\bf 211} (2005), no. 1, 218-246.

\bibitem{had} {\sc J. Hadamard}, {\em Lectures on Cauchy's problem in linear partial differential equations.} Dover Publications, New York, 1953. iv+316 pp.

\bibitem{npv} {\sc E. Di Nezza, G. Palatucci and E. Valdinoci}, {\em Hitchhiker's guide to the fractional Sobolev spaces.}
Bull. Sci. Math. {\bf 136} (2012), no.~5, 521-573.


\bibitem{K} {\sc V. N. Kolokoltsov}, {\em Symmetric stable laws and stable-like jump-diffusions.} London Math. Soc. 80 (2000), 725-768.

\bibitem{S} {\sc L. Silvestre},
{\em PhD thesis.}
The University of Texas at Austin, 2005.

\bibitem{stein} {\sc E. M. Stein}, {\em Harmonic analysis: real-variable methods, orthogonality, and oscillatory integrals.} Princeton Mathematical Series, 43. Monographs in Harmonic Analysis, III. Princeton University Press, Princeton, NJ, 1993

\bibitem{W0}{\sc D. V. Widder}, {\em Positive Temperatures on an Infinite Rod.} Trans. Amer. Math. Soc. {\bf 55} (1944), no. 1, 85-95.

\bibitem{W}{\sc D. V. Widder.}
{\em The heat equation.}
Academic Press, 1975.

\end{thebibliography}
\end{document}